\newtheorem{theorem}{Theorem}
\theoremstyle{plain}
\newtheorem{corollary}[theorem]{Corollary}
\newtheorem{example}[theorem]{Example}
\newtheorem{lemma}[theorem]{Lemma}
\newtheorem{proposition}[theorem]{Proposition}
\newtheorem{remark}[theorem]{Remark}
\newcommand{\Q}{\mathbb Q}
\newcommand{\C}{\mathbb C}
\newcommand{\R}{\operatorname{\textbf{R}}}
\newcommand{\CP}{\mathbb P}
\newcommand{\D}{\operatorname{D}}
\newcommand{\Pic}{\operatorname{Pic}}
\newcommand{\id}{\operatorname{id}}
\newcommand{\pr}{\operatorname{pr}}
\newcommand{\PD}{\operatorname{PD}}
\newcommand{\codim}{\operatorname{codim}}
\newcommand{\red}{\operatorname{red}}
\newcommand{\Gr}{\operatorname{Gr}}
\newcommand{\sm}{\operatorname{sm}}
\newcommand{\dashedlongrightarrow}{\xymatrix@1@=15pt{\ar@{-->}[r]&}}
\renewcommand{\longrightarrow}{\xymatrix@1@=15pt{\ar[r]&}}
\renewcommand{\mapsto}{\xymatrix@1@=15pt{\ar@{|->}[r]&}}
\renewcommand{\twoheadrightarrow}{\xymatrix@1@=15pt{\ar@{->>}[r]&}}
\newcommand{\hooklongrightarrow}{\xymatrix@1@=15pt{\ar@{^(->}[r]&}}
\newcommand{\congpf}{\xymatrix@1@=15pt{\ar[r]^-\sim&}}
\renewcommand{\cong}{\simeq}
\begin{document} 
\title[Decomposable theta divisors and generic vanishing]{Decomposable theta divisors and generic vanishing}

\author{Stefan Schreieder}
\address{Mathematisches Institut, Universit\"at Bonn, Endenicher Allee 60, 53115 Bonn, Germany} 
\email{schreied@math.uni-bonn.de} 

\date{June 20, 2016; \copyright{\ Stefan Schreieder 2016}}
\subjclass[2010]{14K12; 14F17; 14H42} 
%
%

\keywords{Minimal cohomology classes, Theta divisors, Jacobians, Generic vanishing.}

\begin{abstract}    
We study ample divisors $X$ with only rational singularities on abelian varieties that decompose into a sum of two lower dimensional subvarieties, $X=V+W$.   
For instance, we prove an optimal lower bound on the degree of the addition map $V\times W\longrightarrow X$ and show that the minimum can only be achieved if $X$ is a theta divisor.  
Conjecturally, the latter happens only on Jacobians of curves and intermediate Jacobians of cubic threefolds.
As an application, we prove that nondegenerate generic vanishing subschemes of indecomposable principally polarized abelian varieties are automatically reduced and irreducible, have the expected geometric genus, and property $(\mathcal P)$ with respect to their theta duals. 
\end{abstract}

\maketitle

\section{Introduction}

This paper is motivated by a number of conjectures that relate the existence of special subvarieties of abelian varieties to the Schottky problem. 
Most prominently, Debarre's minimal class conjecture \cite{debarre} states that a $g$-dimensional principally polarized abelian variety (ppav) $(A,\Theta)$ contains a subvariety $V\subset A$ of minimal cohomology class $\frac{\theta^{g-d}}{(g-d)!}$ with $1\leq d\leq g-2$, if and only if  one of the following holds:
\begin{enumerate}[(a)]
\item there is a smooth projective curve $C$ and an isomorphism $(A,\Theta)\cong (JC,\Theta_C)$ which identifies $V$ with the Brill--Noether locus $W_d(C)$; \label{item:JC}
\item $g=5$, $d=2$ and there is a smooth cubic threefold $Y\subset \CP^4$ and an isomorphism $(A,\Theta)\cong (JY,\Theta_Y)$ which identifies $V$ with the Fano surface of lines $F\subset JY$ of $Y$.\label{item:JY}
\end{enumerate}
The famous Matsusaka--Ran criterion establishes the conjecture for $d=1$; further evidence was given by Debarre \cite{debarre}, who proved the conjecture for Jacobians of curves.  
There is also a surprising analogy with projective space \cite[\S 2(6)]{pareschi-popa3}, where it is known \cite{eisenbud-harris} that a nondegenerate subvariety $V\subset \CP^n$ of dimension $1\leq d\leq n-2$ has minimal degree if and only if it is (a cone over) a rational normal scroll or the Veronese image of $\CP^2$ in $\CP^5$.

A common feature of the minimal class subvarieties $V\subset (A,\Theta)$ in (\ref{item:JC}) and (\ref{item:JY}) is that in both cases there is a second minimal class subvariety $W$ such that
\begin{align}\label{eq:V+W}
V+W=\Theta .
\end{align} 
If $V=W_d(C)$, then $W=W_{g-d-1}(C)$, and if $V$ is the Fano surface $F$, then $W=-F$ \cite{clemens-griffiths}. 
Conjecturally, these are the only examples where an irreducible theta divisor decomposes into a sum of two lower dimensional subvarieties, see \cite[Section 8.3]{pareschi-popa}.
If $V$ or $W$ is a curve, this was recently proven in \cite{schreieder}, but it is wide open otherwise. 

In this paper we prove some basic properties of decompositions as in (\ref{eq:V+W}). 
Our method works quite generally for decompositions of arbitrary ample divisors of abelian varieties with at most rational singularities.
This includes theta divisors of indecomposable ppavs by a result of Ein and Lazarsfeld \cite{ein-laz}. 

\begin{theorem} \label{thm:P}  
Let $A$ be a $g$-dimensional abelian variety, and let $V,W\subseteq A$ be closed subvarieties of positive dimensions $d$ and $g-d-1$, respectively. 
If $X:=V+W$ is an ample divisor on $A$ with at most rational singularities, then we have the following:
\begin{enumerate}
	\item the subvarieties $V$ and $W$ are nondegenerate; \label{item:P:1}
	\item the degree of the addition map $f:V\times W\longrightarrow X$ satisfies $\deg(f)\geq \binom{g-1}{d}$; 
	 \label{item:P:2}
	\item if $f$ has minimal degree $\deg(f)=\binom{g-1}{d}$, then
	\begin{enumerate}[(i)]
	\item $X$ is a theta divisor on $A$, i.e.\ $h^0(A,\mathcal O_A(X))=1$; \label{item:P:3:0}
	\item the geometric genera are given by $p_g(V)=\binom{g}{d}$ and $p_g(W)=\binom{g}{d+1}$;  \label{item:P:3}
	\item 
	$V$ has property $(\mathcal P)$ with respect to $W$ and viceversa. \label{item:P:4}
	\end{enumerate}
\end{enumerate} 
\end{theorem}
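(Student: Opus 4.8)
The engine of the whole theorem is a single identity. Since $\dim(V\times W)=d+(g-d-1)=g-1=\dim X$, the addition map $f$ is generically finite onto $X$, so pushing the fundamental class of $V\times W$ forward under addition gives $[V]\ast[W]=\deg(f)\,\theta$ in $H^2(A,\Q)$, where $\theta=c_1(\mathcal{O}_A(X))$ and $\ast$ denotes the Pontryagin product. Everything will be extracted from this one relation. The plan is to pass to the cohomological Fourier--Mukai transform $\mathcal F\colon H^*(A,\Q)\to H^*(\widehat A,\Q)$, which interchanges $\ast$ and $\cup$, sends $H^{2k}$ to $H^{2(g-k)}$, and carries $\theta$ to a multiple of $\widehat\theta^{\,g-1}/(g-1)!$, where $\widehat\theta$ is the dual polarization. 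Writing $\widehat V:=\mathcal F([V])\in H^{2d}(\widehat A)$ and $\widehat W:=\mathcal F([W])\in H^{2(g-d-1)}(\widehat A)$, the identity becomes $\widehat V\cup\widehat W=\pm\deg(f)\,\mathcal F(\theta)$, and capping with $\widehat\theta$ expresses $\deg(f)$, up to the factor $h^0(\mathcal{O}_A(X))$ hidden in $\mathcal F(\theta)$, as the intersection number $\int_{\widehat A}\widehat V\cup\widehat W\cup\widehat\theta$.

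The positivity needed to bound this number from below is where the hypothesis on the singularities of $X$ enters. Because $X$ has at most rational singularities, a resolution $\rho\colon\widetilde X\to X$ satisfies $R^i\rho_*\mathcal{O}_{\widetilde X}=0$ for $i>0$, and, more to the point, the pushforwards to $A$ of the canonical bundles of resolutions of $V$ and of $W$ are $GV$-sheaves in the sense of Green--Lazarsfeld and Hacon. Through the Pareschi--Popa correspondence this makes $\widehat V$ and $\widehat W$ \emph{effective} classes, expressible as nonnegative combinations of the minimal classes $\widehat\theta^{\,i}/i!$. I would then observe that the cup product of the two minimal classes of complementary degree contributes exactly $\binom{g-1}{d}$ to $\deg(f)$, while all the remaining terms contribute nonnegatively, which yields (\ref{item:P:2}). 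The nondegeneracy statement (\ref{item:P:1}) I would treat as a separate lemma, proved by contradiction: if $W$ lay in a translate of a proper abelian subvariety $B$, I would push $X$ forward along $A\to A/B$, use that the restriction of the ample $\mathcal{O}_A(X)$ to the fibres forces this map to be surjective with $(\dim B-1)$-dimensional fibres of the form $V_y+W$, and then show that the resulting fibration structure is incompatible with $X$ having rational singularities.

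For the equality case I would run the estimate in reverse. If $\deg(f)=\binom{g-1}{d}$, then every nonnegative correction must vanish and the leading coefficients must be minimal, so $[V]=\theta^{g-d}/(g-d)!$ and $[W]=\theta^{d+1}/(d+1)!$ are exactly the minimal classes; tracing the constants in the Fourier computation, which carry the factor $h^0(\mathcal{O}_A(X))$, then pins $h^0(\mathcal{O}_A(X))=1$, so $X$ is a theta divisor, giving (\ref{item:P:3:0}). For the geometric genera (\ref{item:P:3}) I would argue that the pullback of $H^0(A,\Omega^1_A)$ to a resolution $\widetilde V$ produces, via its $d$-th exterior power, $\binom{g}{d}$ holomorphic $d$-forms, so $p_g(V)\ge\binom{g}{d}$, while generic vanishing combined with the minimality of $[V]$ forces the reverse inequality; symmetrically $p_g(W)=\binom{g}{d+1}$. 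These are precisely the values realized by the symmetric products $\Sym^d C$ and $\Sym^{\,g-d-1}C$ in the Jacobian case. Finally, property $(\mathcal P)$ in (\ref{item:P:4}) should follow from the established minimality: once $[V]$, $[W]$ are minimal and $X=V+W$, the fibres of $f$ and the theta-dual incidence become transparent, and $(\mathcal P)$ is read off from the identification of the theta-dual of $V$ with a translate of $-W$, and viceversa.

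The main obstacle is the sharp positivity underlying the second paragraph: upgrading the qualitative generic-vanishing input (effectivity of $\widehat V$, $\widehat W$) to the quantitative bound with the precise constant $\binom{g-1}{d}$, i.e.\ showing that the minimal classes genuinely minimize $\int_{\widehat A}\widehat V\cup\widehat W\cup\widehat\theta$ among effective classes of the prescribed degrees. This requires controlling the cross terms between the primitive parts of $\widehat V$ and $\widehat W$, which a priori may carry either sign, via the Hodge--Riemann bilinear relations, while simultaneously keeping careful track of the non-principal normalization factor $h^0(\mathcal{O}_A(X))$ throughout the transform; it is this bookkeeping that finally delivers both the inequality and, in the equality case, the value $h^0(\mathcal{O}_A(X))=1$.
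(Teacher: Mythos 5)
Your proposal runs on a positivity claim that is not available and cannot be made available by the route you sketch. You want the Fourier--Mukai transforms $\widehat V=\mathcal F([V])$ and $\widehat W=\mathcal F([W])$ to be \emph{nonnegative combinations of the classes} $\widehat\theta^{\,i}/i!$, deduced from the rational singularities of $X$ via GV-sheaves. But the GV property of $a_{\ast}\omega_{\widetilde V}$ holds for \emph{any} subvariety of an abelian variety (Green--Lazarsfeld/Hacon) irrespective of the singularities of $X$, and it says nothing about the cohomology class $[V]$; the strong statement ``GV-subscheme $\Rightarrow$ minimal class'' of Pareschi--Popa requires $\mathcal I_V(\Theta)$ to be a GV-sheaf, which is a hypothesis on $V$ you do not have. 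Worse, saying that $\widehat V$ is a combination of powers of $\widehat\theta$ is equivalent to saying $[V]$ is proportional to $\theta^{g-d}$ --- essentially the conclusion you are after, not an input; and on an arbitrary abelian variety (note $(A,\theta)$ here need not be principally polarized and can have large N\'eron--Severi rank) classes of subvarieties are not multiples of powers of $\theta$. Positivity of Fourier transforms of effective cycle classes is precisely the kind of open problem discussed in Debarre--Ein--Lazarsfeld--Voisin, and you yourself flag this ``sharp positivity'' as the main obstacle. Since both the inequality $\deg(f)\geq\binom{g-1}{d}$ and the equality analysis (including $h^0(\mathcal O_A(X))=1$) rest on it, the proof has no engine. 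The paper's actual mechanism is entirely different: it shows that the tangent-space endomorphism $c_k(V,W)(x)=\sum_i\Lambda^kP_i$ of $\Lambda^kT_{X,x}$, built from a general fibre of $f$, is a nonzero rational multiple of the identity (using rational singularities to identify holomorphic forms on $A$ with those on a resolution of $X$, a trace-map/correspondence computation, and dominance of the Gauss map of the ample divisor); the bound is then pure linear algebra, since a sum of $\deg(f)$ rank-one projectors can only be an isomorphism of the $\binom{g-1}{d}$-dimensional space $\Lambda^dT_{X,x}$ if $\deg(f)\geq\binom{g-1}{d}$.

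Two further steps would fail even granting the positivity. First, your proof of item (1) attacks the wrong notion: Ran's nondegeneracy means injectivity of $\cup[V]\colon H^{d,0}(A)\to H^{g,g-d}(A)$ (equivalently, the Gauss image spans under the Pl\"ucker embedding), which is much stronger than $V$ not lying in a translate of a proper abelian subvariety; ruling out the latter does not prove the former. Second, in the equality case you assert that minimality of $[V]$ and $[W]$ ``forces'' the genus formula and that property $(\mathcal P)$ can be ``read off'' from the theta-dual incidence. Neither implication is known: Ran's theorem gives $(\mathcal P)$ for \emph{complementary}-dimensional nondegenerate subvarieties with minimal intersection number $\binom{g}{d}$, not for divisor decompositions $V+W=X$, and no result bounds $p_g(V)$ above by $\binom{g}{d}$ from the class alone. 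These statements are exactly the content of items (3)(ii)--(iii) (and of Corollary \ref{cor:P}), which the paper derives from the equality case of the projector identity: with exactly $\binom{g-1}{d}$ summands the lines $\Lambda^dT_{V,v_i}$ must be linearly independent (giving the genus via a trace-map argument and, further, surjectivity of $j^{\ast}$ on $(g-1,0)$-forms, hence $h^0(\mathcal O_A(X))=1$), and the resulting relations $\Lambda^{g-1-d}T_{W,w_j}\wedge\Lambda^dT_{V,v_i}=0$ for $i\neq j$, combined with nondegeneracy, give property $(\mathcal P)$ following Debarre. So the proposal is circular at exactly the points where the paper has to work.
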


The above theorem has several interesting consequences.
For instance, if $V$ and $W$ are assumed to be geometrically nondegenerate\footnote{ This is the weakest nondegeneracy condition one usually considers, see Section \ref{subsec:nondeg} below.}, then the sum $X=V+W$ is known to be an ample divisor and so it is (very) singular unless $\deg(f)$ is sufficiently large and $V$ and $W$ are actually nondegenerate, which is a stronger nondegeneracy condition, see Corollary \ref{cor:singular} below.
The lower bound on $\deg(f)$ that we prove in item (\ref{item:P:2}) is optimal and coincides exactly with the degree of the addition map in the examples (\ref{item:JC}) and (\ref{item:JY}): 
$$
W_d(C)+W_{g-d-1}(C)=\Theta_C\ \ \text{and}\ \ F-F=\Theta_Y .
$$
By item (\ref{item:P:3:0}), the addition map achieves the minimal possible degree only in decompositions of theta divisors, and so, conjecturally, only in the two examples above.
Note also that item (\ref{item:P:3}) in Theorem \ref{thm:P} recovers precisely the formula of the geometric genus of the known examples of subvarieties of minimal class.

Property $(\mathcal P)$ in item (\ref{item:P:4}) of Theorem \ref{thm:P} is defined in Debarre's paper \cite[Section 2]{debarre} and goes back to Ran \cite{ran,ran2}; we recall the definition in Section \ref{subsec:propP} below. 
Although of a slightly technical nature, it is a very useful notion which frequently forces $V$ or $W$ to decompose further into a sum of lower dimensional subvarieties, cf.\ \cite{CMPS,debarre}.

Our original motivation for Theorem \ref{thm:P} comes from the study of generic vanishing (GV) subschemes, that is, of closed subschemes $Z$ of a ppav $(A,\Theta)$ whose  twisted ideal sheaf $\mathcal I_Z(\Theta)$ is a GV-sheaf.
Concretely, this means 
\[
\codim\{ L\in \Pic^0(A)\mid H^i(A,\mathcal I_Z(\Theta)\otimes L)\neq 0 \}\geq i \ \ \text{for all}\ \ i\geq 0 ;
\] 
it is a natural regularity condition for sheaves on abelian varieties \cite{pareschi-popa-JAMS,pareschi-popa-AJM}. 
For a nondegenerate subscheme $Y\subset \CP^n$ of projective space, the formal analogue of the above condition is $2$-regularity of $\mathcal I_{Y}$ in the sense of Castelnuovo--Mumford, see \cite{pareschi-popa-JAMS} and \cite[\S 2(7)]{pareschi-popa3}.
The latter is known to be equivalent to $Y$ being of minimal degree in $\CP^n$, which explains the expectation that geometrically nondegenerate GV-subschemes of ppavs should be precisely those subschemes whose cohomology classes are minimal. 
One direction of this conjecture had been proven by Pareschi and Popa \cite{pareschi-popa}: any geometrically nondegenerate GV-subscheme of a ppav has minimal cohomology class.
Conversely, the examples (\ref{item:JC}) and (\ref{item:JY}) of minimal class subvarieties are known to be GV-subschemes, see \cite{pareschi-popa-JAMS,pareschi-popa} and \cite{hoering}.
The generic vanishing conjecture of Pareschi and Popa \cite{pareschi-popa} predicts that these are the only examples of geometrically nondegenerate GV-subschemes on $g$-dimensional indecomposable ppavs of dimension $1\leq d\leq g-2$.
 
An important duality result of Pareschi and Popa associates to any geometrically non-degenerate GV-subscheme $Z$  of an indecomposable ppav $(A,\Theta)$ another GV-subscheme $V(Z)$, called the theta dual of $Z$, such that the reduced schemes $Z^{\red}$ and $-V(Z)^{\red}$ contain components $V$ and $W$ with $\Theta=V+W$.
As an application of Theorem \ref{thm:P}, we prove that $Z$ and $V(Z)$ are integral.
In particular, $V=Z$ and $W=-V(Z)$ have minimal cohomology classes and so $\Theta$ decomposes into a sum of minimal class subvarieties.  
We further deduce that $Z$ has the expected geometric genus and property $(\mathcal P)$ with respect to $-V(Z)$.
This result is important for applications; for instance, it plays a central role in the recent proof of the generic vanishing conjecture in dimension five, established in joint work of the author with Casalaina-Martin and Popa \cite{CMPS}.

\begin{theorem} \label{thm:GV} 
Let $(A,\Theta)$ be an indecomposable ppav of dimension $g$ and let $Z\subseteq A$ be a geometrically nondegenerate closed GV-subscheme of dimension $d$.
Then,
\begin{enumerate}
\item $Z$ and its theta dual $V(Z)$ are reduced and irreducible; in particular, $\Theta$ decomposes into a sum of minimal class subvarieties 
 of dimensions $d$ and $g-d-1$;\label{item:GV:1}  
\item the geometric genera are given by $p_g( Z)=\binom{g}{d}$ and  $p_g(V(Z))=\binom{g}{d+1}$;  \label{item:GV:2} 
\item $Z$ has property $(\mathcal P)$ with respect to $-V(Z)$ and viceversa.  \label{item:GV:4}
\end{enumerate}
\end{theorem}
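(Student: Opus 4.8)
The plan for Theorem \ref{thm:GV} is to reduce everything to Theorem \ref{thm:P} by analyzing the theta-dual construction of Pareschi and Popa. The crucial input is that $Z$ is a GV-subscheme, and by the duality result invoked in the introduction, the reduced schemes $Z^{\red}$ and $-V(Z)^{\red}$ contain components $V$ and $W$ with $\Theta = V+W$. Since $(A,\Theta)$ is indecomposable, Ein--Lazarsfeld \cite{ein-laz} guarantees that $\Theta$ has at most rational singularities, so Theorem \ref{thm:P} applies to this decomposition. The whole strategy, then, is to show that the addition map $f\colon V\times W\longrightarrow \Theta$ attains the minimal degree $\binom{g-1}{d}$, and then to upgrade the conclusions of Theorem \ref{thm:P} from the components $V,W$ back to the full schemes $Z$ and $V(Z)$.

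First I would establish reducedness and irreducibility, item (\ref{item:GV:1}). By the theorem of Pareschi--Popa that any geometrically nondegenerate GV-subscheme has minimal cohomology class, the class of $Z$ is $\frac{\theta^{g-d}}{(g-d)!}$; the same applies to $V(Z)$, which is again a GV-subscheme of dimension $g-d-1$. The minimality of these classes forces the components $V\subseteq Z^{\red}$ and $W\subseteq -V(Z)^{\red}$ to each carry the full minimal class, which both pins down their degrees against $\Theta$ and, via a degree computation on cohomology classes, forces $\deg(f)=\binom{g-1}{d}$ to be exactly the minimum permitted by item (\ref{item:P:2}). Once minimality of $\deg(f)$ is in hand, item (\ref{item:P:3:0}) is automatic on an indecomposable ppav. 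The remaining point is to rule out embedded or extra components: since the class of $Z$ already equals the minimal class carried by the single component $V$, there is no room for a second component or for nonreduced structure, so $Z=V$ (and likewise $-V(Z)=W$) as reduced irreducible varieties. This identification is what lets me transport the remaining conclusions verbatim.

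With $Z=V$ and $-V(Z)=W$ identified as the integral subvarieties appearing in Theorem \ref{thm:P}, items (\ref{item:GV:2}) and (\ref{item:GV:4}) follow immediately: the geometric genus formulas $p_g(Z)=\binom{g}{d}$ and $p_g(V(Z))=\binom{g}{d+1}$ are exactly items (\ref{item:P:3}), and property $(\mathcal P)$ of $Z$ with respect to $-V(Z)$ (and conversely) is item (\ref{item:P:4}). No further geometry is needed here beyond the dictionary between the two theorems.

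**The main obstacle** I anticipate lies entirely in item (\ref{item:GV:1}), specifically in proving that the minimal cohomology class of $Z$ is concentrated on a \emph{single reduced irreducible} component rather than distributed over several components or inflated by nilpotents. The minimal class $\frac{\theta^{g-d}}{(g-d)!}$ is in a precise sense \emph{indecomposable as a cohomology class} on an indecomposable ppav—it cannot be written as a nontrivial effective sum—so any splitting $Z^{\red}=V\cup V'$ with both pieces of dimension $d$ would contradict minimality of $[Z]$. The delicate part is handling embedded components and multiplicities schematically rather than just in cohomology: one must argue that the GV condition on $\mathcal I_Z(\Theta)$, combined with the sharp degree equality $\deg(f)=\binom{g-1}{d}$ from Theorem \ref{thm:P}, leaves no slack for a generically nonreduced structure. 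I expect this to proceed by comparing the cohomology class of the scheme-theoretic $Z$ with that of $Z^{\red}$ and showing they must agree, forcing $Z$ to be reduced; the indecomposability of $(A,\Theta)$ is used precisely to exclude the degenerate decompositions of $\Theta$ that would otherwise permit multiple components.
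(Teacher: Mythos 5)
Your reduction of items (\ref{item:GV:2}) and (\ref{item:GV:4}) to Theorem \ref{thm:P} is exactly the paper's route (it is Corollary \ref{cor:P}: once $V$ and $W$ have minimal classes and $V+W=\Theta$, the Pontryagin product identity $[V]\star[W]=\deg(f)\cdot\theta$ pins down $\deg(f)=\binom{g-1}{d}$), and you correctly identify item (\ref{item:GV:1}) as the crux. But your proof of item (\ref{item:GV:1}) rests on the assertion that the minimal class $\theta^{g-d}/(g-d)!$ on an indecomposable ppav ``cannot be written as a nontrivial effective sum,'' so that a single component of $Z^{\red}$ must carry the whole class. This is a genuine gap: that statement is not proved or cited anywhere in the paper, is not known in general (for $d=g-1$ it does follow from $h^0(A,\mathcal O_A(\Theta))=1$, but in intermediate codimension the structure of effective cones of cycles on abelian varieties is largely open), and nothing in your proposal supplies an argument for it. Without it you cannot single out components $V\subseteq Z^{\red}$, $W\subseteq -V(Z)^{\red}$ of minimal class, so the whole chain --- minimal degree of $f$, then transport of Theorem \ref{thm:P} back to $Z$ and $V(Z)$ --- never gets started.

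The paper's Proposition \ref{prop:minclass} is designed precisely to avoid this claim: it never argues with the class of $Z$ alone, but plays $Z$ and $-V(Z)$ against each other. Writing $[Z]=\sum_i a_i[V_i]$ and $[-V(Z)]=\sum_j b_j[W_j]$ over irreducible components (with multiplicities $a_i,b_j\geq 1$), and using the pointwise containment $Z-V(Z)\subseteq\Theta$ from the theta-dual construction, the Pontryagin product gives $\binom{g-1}{d}\cdot\theta=\sum_{i,j}a_ib_jc_{ij}\cdot\theta$, where $c_{ij}$ is the degree of $V_i\times W_j\longrightarrow\Theta$ if $V_i+W_j=\Theta$ and $0$ otherwise. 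Some $c_{i_0j_0}$ is nonzero; item (\ref{item:P:1}) of Theorem \ref{thm:P} then makes $V_{i_0}$ and $W_{j_0}$ nondegenerate; Lemma \ref{lem:sum} propagates this to show $V_i+W_j=\Theta$ for \emph{all} pairs $(i,j)$; and item (\ref{item:P:2}) gives $c_{ij}\geq\binom{g-1}{d}$ for every pair, which forces a single component on each side with multiplicity one. So the ``atomicity'' you want is a \emph{consequence} of the degree bound applied to all pairs of components, not an a priori property of the class. Finally, note that your proposed comparison of $[Z]$ with $[Z^{\red}]$ can at best yield generic reducedness: embedded components are invisible in cohomology. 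The paper passes from generically reduced to reduced using that $Z$ and $V(Z)$ are Cohen--Macaulay (Theorem \ref{thm:pareschi-popa}), hence have no embedded points; this ingredient is missing from your proposal.
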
 


Theorems \ref{thm:P} and \ref{thm:GV} are related to a theorem of Ran  \cite{ran,ran2}. 
He proved that a $d$-dimensional nondegenerate subvariety $V$ of an abelian variety $A$ has property $(\mathcal P)$ with respect to a $(g-d)$-dimensional nondegenerate subvariety $W\subseteq A$ if the intersection number $V.W$ attains the smallest possible value among all nondegenerate subvarieties, which he proves to be $\binom{g}{d}$, see  \cite[Theorem 3.1]{debarre}.
This result was the main tool of Debarre's aforementioned proof of the minimal class conjecture for Jacobians in \cite{debarre}.
Items (\ref{item:P:2}) and (\ref{item:P:4})  in Theorem \ref{thm:P} are analogues of Ran's result under quite different assumptions. 
In contrast to Ran's theorem, item (\ref{item:P:1})  in Theorem \ref{thm:P} is not an assumption but a consequence.
Moreover, items (\ref{item:P:3:0}) and (\ref{item:P:3}) are new observations for which no analogues in Ran's situation are known (although it is natural to expect them).

\textbf{Notation and Conventions.}
We work over the field of complex numbers.
A variety is an integral separated scheme of finite type over $\C$.  
If $V$ and $W$ are subschemes of an abelian variety $A$, then we denote by $V+W$ the scheme theoretic image of the addition morphism $V\times W\longrightarrow A$. 

\section{Preliminaries}

\subsection{The trace map} \label{subsec:trace}
For any proper generically finite morphism $f:X\longrightarrow Y$ between complex varieties, there is a trace map
$$
f_{\ast}:H^{0}(X^{\sm},\Omega_{ X^{\sm}}^k)\longrightarrow H^{0}(Y^{\sm},\Omega_{Y^{\sm}}^k) ,
$$
where $X^{\sm}\subseteq X$ and $Y^{\sm}\subseteq Y$ denote the smooth loci, see \cite{griffiths}.
If $y\in Y$ is general with $f^{-1}(y)=\{x_1,\dots ,x_n\}$, then 
$$
(f_{\ast}\omega)_y= \sum_{i=1}^n \omega_{x_i},
$$
where we use the isomorphism $\Omega_{X,x_i}^k\cong \Omega_{Y,y}^k$, induced by $f$. 
If $X$ and $Y$ are smooth and proper, then the above trace map coincides with the restriction of the Gysin morphism
$$
f_{\ast}:  H^{k}( X,\C)\longrightarrow H^{k}( Y,\C)  ,
$$
to the subspaces of Hodge type $(k,0)$, see \cite[Section 7.3.2]{voisin1}.

\subsection{The Pontryagin product} \label{subsec:star}
For two cohomology classes $\alpha\in H^a(A,\C)$ and $\beta \in H^b(A,\C)$ on an abelian variety $A$, the Pontryagin product $\alpha\star \beta $ is defined via 
$$
\alpha\star \beta:=m_{\ast}(\pr_1^{\ast} \alpha \cup \pr_2^\ast \beta) \in H^{a+b-2g}(A,\C) ,
$$
where $m_{\ast}$ denotes the Gysin morphism (cf. \cite[\S 7.3.2]{voisin1}) with respect to the multiplication map $m:A\times A\longrightarrow A$. 
For instance, if $V$ and $W$ are subvarieties of $A$, then
$$
[V]\star [W]=\deg(f)\cdot [V+W],
$$ 
where $f:V\times W\longrightarrow V+W$ denotes the addition morphism, and where we put $\deg(f)=0$ if $f$ is not generically finite.

If $\hat A=\Pic^0(A)$ denotes the dual abelian variety, then there are natural isomorphisms $H^{2g-i}(\hat A,\C) \cong H^{2g-i}(A,\C)^{\ast}$ and so we obtain isomorphisms
$$
\PD:H^{i}(A,\C)\stackrel{\sim} \longrightarrow H^{2g-i}(\hat A,\C)  ,
$$ 
induced by $\omega \mapsto \int_A \omega\cup -$ and Poincar\'e duality. 
These isomorphisms  are compatible with the respective Hodge decompositions and exchange the Pontryagin product on $A$ with the cup product on $\hat A$, 
\begin{align} \label{eq:PD}
\PD(\alpha\star \beta)= \PD(\alpha)\cup \PD(\beta) ,
\end{align}
see for instance \cite[Proposition 1.7]{debarre-ein-laz-voisin}. 


\subsection{The property $(\mathcal P)$} \label{subsec:propP}
Let $V,W\subset A$ be subvarieties of an abelian variety, and let $f:V\times W\longrightarrow V+W$ denote the addition morphism.
Following Debarre \cite[Section 2]{debarre}, we say that $V$ has property $(\mathcal P)$ with respect to $W$ if for general $v\in V$, $v\times W$ is the only subvariety of $f^{-1}(v+W)$ which dominates $W$ via the second projection and $v+W$ via $f$. 
A guiding example \cite[Example 2.2]{debarre} is given by the Brill--Noether locus $W_d(C)$ inside the Jacobian of a smooth projective curve $C$ of genus $g$, which has property $(\mathcal P)$ with respect to $W_{g-e}(C)$ for all $e\geq d$.

Our interest in this property arises from  \cite{schreieder}, where we have shown that an indecomposable ppav $(A,\Theta)$ is isomorphic to the Jacobian of a smooth curve if and only if $\Theta=C+W$ for some curve $C\subset A$.
On the other hand, the property $(\mathcal P)$ is a very efficient tool for producing curve summands.  
The following example shows a baby case of this phenomenon; more elaborate arguments are contained in \cite{CMPS} and \cite{debarre}.

\begin{example} \label{ex:propP}
Let $C\subset A$ be a curve in an abelian variety which has property $(\mathcal P)$ with respect to another subvariety $W\subset A$.
If the addition morphism $f:C\times W\longrightarrow C+W$ is not birational, then $W=C+W'$ for some subvariety $W'\subset A$. 
\end{example}
\begin{proof}
Let $c\in C$ be a general point. 
Then the reduced preimage of $c+W$ decomposes as
$$
f^{-1}(c+W)^{\red}=c\times W\cup R \cup Q ,
$$
with $f(Q)\subsetneq c+W$, and such that each component of $R$ dominates $c+W$ via $f$.
Since $f$ is not birational, $R\neq \emptyset$ and so we can pick a component $R'$ of $R$.
Since $C$ has property $(\mathcal P)$ with respect to $W$, $\pr_2(R')\subsetneq W$, which implies
$
R'=C\times \pr_2(R') 
$.
Applying $f$ shows then that $W$ has a curve summand, as we want.
\end{proof}

\subsection{Geometrically nondegenerate subschemes} \label{subsec:nondeg}
Let $Z\subseteq A$ be a closed subscheme of dimension $d$ of a $g$-dimensional abelian variety $A$.
Following Ran \cite{ran}, $Z$ is called \textit{nondegenerate} if the cup product map
$$
\cup [Z]:H^{d,0}(A)\longrightarrow H^{g,g-d}(A)
$$
is injective (hence an isomorphism).  
This definition does not depend on the components $Z'$ of $Z$ of dimension $<d$, as such components satisfy $[Z']\cup \omega=0$ for all $\omega\in H^{d,0}(A)$.
If $Z$ is reduced and equi-dimensional, it is nondegenerate if and only if the image of the Gauss map $G_Z:Z\dashrightarrow \Gr(d,g)$ is via the Pl\"ucker embedding not contained in any hyperplane, see \cite[Section II]{ran}. 

We say that $Z$ is \textit{geometrically nondegenerate} if the kernel of the above cup product map contains no decomposable elements, i.e. nontrivial elements of the form $\alpha_1\cup \dots \cup \alpha_d$  with  $\alpha_i\in H^{1,0}(A)$. 
A slightly different definition was previously given by Ran \cite{ran}, who assumes that $Z$ is pure-dimensional and reduced, and asks that any nonzero decomposable holomorphic $d$-form on $A$ pulls back to a nonzero form on some resolution of singularities of $Z$.
The following lemma shows that both notions of geometric nondegeneracy coincide.
Moreover, $Z$ is geometrically nondegenerate if and only if the reduced scheme $Z^{\red}$ is.

\begin{lemma} \label{lem:geomnondeg}
Let $Z$ be a closed subscheme of dimension $d$ of a $g$-dimensional abelian variety $A$.
Then, $Z$ is geometrically nondegenerate if and only if each nonzero decomposable holomorphic $d$-form on $A$ pulls back to a nonzero form on some resolution of singularities $\widetilde Z^{\red}$ of the reduced scheme $Z^{\red}$.
\end{lemma}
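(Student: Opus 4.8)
The plan is to reduce the statement to a single numerical equivalence: for every $\omega\in H^{d,0}(A)$, and with $f\colon \widetilde Z^{\red}\longrightarrow A$ the natural map from a resolution, one has $\omega\cup[Z]=0$ if and only if $f^{\ast}\omega=0$. Granting this, the lemma follows at once, since the kernel of $\cup[Z]$ contains a nonzero decomposable form $\omega=\alpha_1\cup\dots\cup\alpha_d$ exactly when that form pulls back to zero on $\widetilde Z^{\red}$; absence of nonzero decomposables in the kernel is thus precisely Ran's nonvanishing condition. The same equivalence applied to $[Z^{\red}]$ in place of $[Z]$ yields the additional assertion that geometric nondegeneracy depends only on $Z^{\red}$. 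Since the space of holomorphic $d$-forms on a smooth projective variety is a birational invariant, the vanishing of $f^{\ast}\omega$ does not depend on the chosen resolution, so ``some resolution'' may be replaced by ``any resolution''. Finally, a $d$-form pulls back to zero on every component of dimension $<d$, and such components do not contribute to $[Z]\in H^{2(g-d)}(A)$; hence it suffices to treat the pure $d$-dimensional part, and by linearity the case of a single subvariety.

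First I would settle the case where $V\subseteq A$ is irreducible of dimension $d$, with resolution $\pi\colon\widetilde V\to V$ and $f=\iota\circ\pi\colon\widetilde V\to A$, where $\iota$ is the inclusion. Then $f_{\ast}(1)=[V]$, and the projection formula gives $\omega\cup[V]=f_{\ast}(f^{\ast}\omega)$, where $\sigma:=f^{\ast}\omega\in H^{0}(\widetilde V,\Omega_{\widetilde V}^{d})$ is a holomorphic top form. Pairing with $\overline\omega$ and using $\int_A f_{\ast}\mu=\int_{\widetilde V}\mu$ for $\mu$ of top degree, one obtains
\[
\int_A (\omega\cup[V])\cup\overline\omega=\pm\int_{\widetilde V}\sigma\wedge\overline\sigma .
\]
As $i^{d^2}\,\sigma\wedge\overline\sigma$ is a nonnegative multiple of the volume form, strictly positive wherever $\sigma\neq0$, the right-hand integral vanishes if and only if $\sigma=0$. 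Hence $\omega\cup[V]=0$ forces $f^{\ast}\omega=0$; the converse is immediate from $\omega\cup[V]=f_{\ast}(f^{\ast}\omega)$. This establishes the equivalence for irreducible $V$.

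For a general subscheme $Z$, let $Z_1,\dots,Z_r$ be the $d$-dimensional irreducible components of $Z^{\red}$, so that $[Z]=\sum_i m_i[Z_i]$ with positive multiplicities $m_i$. Setting $\sigma_i:=f^{\ast}\omega|_{\widetilde{Z_i}}$ and applying the irreducible case componentwise gives
\[
i^{d^2}\int_A(\omega\cup[Z])\cup\overline\omega=\sum_{i=1}^r m_i\int_{\widetilde{Z_i}}i^{d^2}\,\sigma_i\wedge\overline{\sigma_i} ,
\]
a sum of nonnegative terms with positive coefficients. It therefore vanishes if and only if each $\sigma_i$ vanishes, that is, if and only if $f^{\ast}\omega=0$ on $\widetilde Z^{\red}$. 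Combined with $\omega\cup[Z]=\sum_i m_i(\omega\cup[Z_i])$ and the irreducible case, this yields $\omega\cup[Z]=0$ if and only if $f^{\ast}\omega=0$ on $\widetilde Z^{\red}$, completing the argument.

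The main obstacle is the positivity step in the irreducible case: the identity above converts the a priori indirect cohomological condition $\omega\cup[V]=0$ into the $L^2$-norm of the pulled-back form, and it is exactly the sign of $i^{d^2}\sigma\wedge\overline\sigma$ that rules out cancellation. Propagating this through reducible and non-reduced $Z$ then only requires the multiplicities $m_i$ to be positive, so that no cancellation can occur in the resulting nonnegative sum.
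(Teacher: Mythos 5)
Your proof is correct and takes essentially the same approach as the paper: both reduce to the pure $d$-dimensional components of $Z^{\red}$ with positive multiplicities, use the projection formula $\omega\cup[Z_i]=j_{i\ast}j_i^{\ast}\omega$ for one implication, and use Hodge--Riemann positivity (your pointwise positivity of $i^{d^2}\sigma\wedge\overline{\sigma}$ is exactly the paper's constant $\epsilon$) to prevent cancellation in the sum over components for the other. The only cosmetic difference is that you package the argument as a biconditional, $\omega\cup[Z]=0$ if and only if $f^{\ast}\omega=0$, valid for all of $H^{d,0}(A)$ before specializing to decomposable forms; the paper's proof establishes the same equivalence without isolating it as a separate statement.
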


\begin{proof}   
We follow the arguments in \cite[Lemma II.1]{ran}, where a similar statement is proven for nondegenerate reduced subschemes.

Since any holomorphic $d$-form vanishes on a variety of dimension $<d$, we may without loss of generality assume that $Z$ has pure dimension $d$ and so $[Z]\in H^{2g-2d}(A)$.
Hence, $[Z]=\sum _i a_i [Z_i]$, where $Z_i$ runs through the irreducible components of the reduced scheme $Z^{\red}$, and $a_i$ denotes the multiplicity of $Z_i$ in $Z$.

Let $\omega\in H^{d,0}(A)$ be a decomposable holomorphic $d$-form.
If $Z$ is geometrically nondegenerate, then 
$$
0\neq \omega \cup [Z] =\sum_i a_i \cdot \omega \cup [Z_i] .
$$
Let $j_i:\widetilde {Z}_i\longrightarrow A$ be the composition of a resolution of singularities $\widetilde {Z}_i\longrightarrow Z_i$ with the inclusion $Z_i\subseteq A$.
Then, 
$$
\omega \cup [Z_i] ={j_i}_{\ast}\circ {j_i}^{\ast} ( \omega) ,
$$
which proves that there is a component $Z_i$ with ${j_i}^{\ast} ( \omega)\neq 0$ and so $\omega$ pulls back to a nontrivial class on $\widetilde Z^{\red}$.

Conversely, if $\omega$ is a decomposable holomorphic $d$-form on $A$ with ${j_0}^{\ast} ( \omega)\neq 0$ for some component $Z_{0}$ of $Z^{\red}$, then, by the Hodge--Riemann bilinear relations,
$$
0< \epsilon \cdot  \overline {{j_0}^{\ast} ( \omega)} \cup {j_0}^{\ast} ( \omega)= \epsilon \cdot \overline{\omega}\cup \omega \cup [Z_0] ,
$$
where $\epsilon$ is a constant which depends only on $d$.
Similarly, $0\leq \epsilon \cdot \overline{\omega}\cup \omega \cup [Z_i] $ for all $i$ and so  
$$
0< \sum_i \epsilon \cdot \overline{\omega}\cup \omega \cup a_i[Z_i] =\epsilon \cdot \overline {\omega} \cup \omega \cup [Z].
$$
Hence, $Z$ is geometrically nondegenerate, as we want.
\end{proof}

The following lemma describes an important property of geometrically nondegenerate subschemes.
If $Z$ is reduced and irreducible, it is due to Debarre  \cite[p.\ 105]{debarre-tores}.

\begin{lemma} \label{lem:sum}
Let $Z$ be a closed geometrically nondegenerate subscheme of dimension $d$ of a $g$-dimensional abelian variety $A$.
Then, for any closed subscheme $Z'\subseteq A$, we have $\dim(Z+Z')=\dim(Z)+\dim(Z')$ or $Z+Z'=A$.
\end{lemma}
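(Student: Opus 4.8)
The plan is to argue by contradiction, producing a nonzero decomposable holomorphic $d$-form that restricts to zero on $Z$, which contradicts geometric nondegeneracy via Lemma \ref{lem:geomnondeg}.

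First I would reduce to a convenient situation. Since every holomorphic $d$-form vanishes on components of dimension $<d$, I may replace $Z$ by the union of its $d$-dimensional components; and since $\dim(Z+Z')$ and the property $Z+Z'=A$ depend only on the underlying reduced schemes, I may assume $Z$ and $Z'$ are reduced. Writing $Z'=\bigcup_i Z'_i$ and using $\dim(Z+Z')=\max_i\dim(Z+Z'_i)$ together with the fact that $Z+Z'=A$ iff some $Z+Z'_i=A$, I may further assume $Z'$ irreducible of dimension $e$. Finally I would reduce to the case $d+e\le g$: if $e>g-d$, I replace $Z'$ by an irreducible subvariety $Z''\subseteq Z'$ of dimension $g-d$ (for instance a general complete intersection of $Z'$ with translates of an ample divisor), so that it suffices to prove $Z+Z''=A$, the boundary case $d+\dim(Z'')=g$.

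So assume $d+e\le g$ and, for contradiction, that $\dim(Z+Z')<d+e$. Let $B:=(Z+Z')^{\red}$ and let $f\colon Z\times Z'\longrightarrow B$ be the addition morphism; then $f$ has general fibres of dimension $r:=d+e-\dim(B)\ge 1$. Identifying tangent spaces of subvarieties of $A$ with subspaces of $T_0A$ by translation, the fibre of $f$ through a general point $(z,z')$ is, via $(z,b-z)\mapsto z$ with $b=z+z'$, the intersection $Z\cap(b-Z')$, whose tangent space at $(z,z')$ equals $T_zZ\cap T_{z'}Z'$. Hence for general $(z,z')\in Z\times Z'$ one has $T_zZ\cap T_{z'}Z'\ne 0$.

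The key step is to convert this varying condition into a statement about a single fixed subspace. Fixing a general point $z'_0\in Z'$ and setting $Q_0:=T_{z'_0}Z'\subseteq T_0A$, an $e$-dimensional subspace, the previous observation applied with $z'=z'_0$ and $z$ general gives $T_zZ\cap Q_0\ne 0$ for general $z\in Z$. Since $e\le g-d$, I may choose a subspace $L\subseteq T_0A$ of dimension $g-d$ with $Q_0\subseteq L$, so that $T_zZ\cap L\ne 0$ for general $z$. Writing $L=\bigcap_{i=1}^d\ker\alpha_i$ for linearly independent $\alpha_i\in H^{1,0}(A)=T_0^{\ast}A$, the decomposable form $\omega:=\alpha_1\wedge\cdots\wedge\alpha_d$ is nonzero and satisfies $\omega|_{T_zZ}=0$ for general $z$, because for a $d$-dimensional subspace $P$ one has $\omega|_P=0$ exactly when $P\cap L\ne 0$. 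Thus $\omega$ pulls back to zero on a resolution of $Z$, contradicting geometric nondegeneracy by Lemma \ref{lem:geomnondeg}. This shows $\dim(Z+Z')=d+e$ whenever $d+e\le g$, and combined with the reduction above it yields the dichotomy in general.

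The main obstacle is exactly the passage in the last paragraph from the pointwise, $z'$-dependent condition $T_zZ\cap T_{z'}Z'\ne 0$ to a single fixed subspace $L$ against which every tangent $d$-plane of $Z$ fails to be transverse; fixing one general $z'_0$ and enlarging $Q_0$ to a $(g-d)$-dimensional $L$ is what makes the decomposable form available. A secondary point requiring care is the tangent-space identification of the fibres of $f$, and the reduction handling $d+e>g$, where the transversality obstruction disappears and one must instead pass to a lower-dimensional subvariety of $Z'$.
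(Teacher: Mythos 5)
Your proof is correct and is essentially the paper's own argument run in the contrapositive: the paper likewise fixes a smooth point $z'\in Z'$, chooses a $(g-d)$-dimensional subspace $L\subseteq T_{A,0}$ containing $T_{Z',z'}$ (or contained in it when $\dim Z'>g-d$), forms the associated decomposable $d$-form, and applies Lemma \ref{lem:geomnondeg} directly to produce a component $Z_0$ of $Z$ whose general tangent space meets $T_{Z',z'}$ transversely, so that the differential of the addition map has maximal rank. The only cosmetic differences are your reduction to $\dim Z'\le g-d$ by cutting $Z'$ down to a $(g-d)$-dimensional subvariety (the paper instead shrinks $L$ into $T_{Z',z'}$ in that case) and your detour through fibre dimensions of the addition map, where the paper argues directly with its differential.
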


\begin{proof} 
Replacing $Z'$ by a component of maximal dimension of the reduced scheme ${Z'}^{\red}$, we may assume that $Z'$ is a $d'$-dimensional subvariety of $A$.
Let $z'\in Z'$ be a smooth point and consider the tangent space $T_{Z',z'}$, which we think of as subspace of $T_{A,0}$.
Let $L\subseteq T_{A,0}$ be a $(g-d)$-dimensional subspace which contains $T_{Z',z'}$ if $d'\leq g-d$ and which is contained in $T_{Z',z'}$ if $d'>g-d$.
This subspace gives rise to a decomposable holomorphic $d$-form $\omega$ on $A$, given by the quotient map 
$$
\Lambda^dT_{A,0}\longrightarrow \Lambda^dT_{A,0}\slash (L\wedge \Lambda^{d-1}T_{A,0}) \cong \C ,
$$
well-defined up to a nonzero multiple.
By Lemma \ref{lem:geomnondeg}, there is a  component $Z_0$ of the reduced scheme $Z^{\red}$ such that $\omega$ pulls back to a nonzero form on some resolution of singularities of $Z_0$.
This implies that $Z_0$ has dimension $d$ and that for a general point $z_0\in Z_0$, $T_{Z_0,z_0}\cap L=0$.
Therefore, $T_{Z_0,z_0}$ meets $T_{Z',z'}$ transversely, where we think of both vector spaces as subspaces of $T_{A,0}$.
This proves the lemma, because the differential of the addition morphism $Z_0\times Z'\longrightarrow Z_0+Z'$ at $(z_0,z')$ is given by vector addition.
\end{proof}

\begin{example} \label{ex:ample}
A divisor $D\subset A$ on an abelian variety $A$ is geometrically nondegenerate if and only if it is ample.
\end{example}
\begin{proof}
If $D$ is ample, then, by the Hard Lefschetz Theorem, $D$ is nondegenerate, hence geometrically nondegenerate.
Conversely, if $D$ is geometrically nondegenerate and $C\subset A$ is a curve, then $D-C= A$ by Lemma \ref{lem:sum} and so a general translate of $C$ meets $D$ in a positive number of points.
Hence, $D$ is ample, as we want.
\end{proof}

Debarre proved that the sum of geometrically nondegenerate subvarieties is geometrically nondegenerate \cite[p.\ 105]{debarre-tores}.
Example \ref{ex:ample} has therefore the following consequence.

\begin{corollary} \label{cor:V+W=ample}
Let $A$ be a $g$-dimensional abelian variety, and let $V,W\subseteq A$ be closed geometrically nondegenerate subvarieties of dimensions $d$ and $g-1-d$, respectively.
Then, $X=V+W$ is an ample divisor on $A$. 
\end{corollary}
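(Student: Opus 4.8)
The plan is to deduce the corollary by chaining together three results already established in the excerpt: the dimension dichotomy of Lemma \ref{lem:sum}, Debarre's theorem that a sum of geometrically nondegenerate subvarieties is geometrically nondegenerate, and the ampleness criterion of Example \ref{ex:ample}. The first task is to verify that $X=V+W$ is genuinely a divisor, i.e.\ that it has codimension one. Since $V$ is geometrically nondegenerate of dimension $d$, I would apply Lemma \ref{lem:sum} to the pair $V,W$, which yields the dichotomy
\[
\dim(V+W)=d+(g-1-d)=g-1 \quad\text{or}\quad V+W=A .
\]

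The second step excludes the alternative $V+W=A$ by a dimension count. Recall that $V+W$ is by definition the scheme-theoretic image of the addition morphism $V\times W\longrightarrow A$, whose source has dimension $d+(g-1-d)=g-1<g$; hence its image cannot be all of $A$. Consequently $\dim(V+W)=g-1$, so $X$ is an effective divisor on $A$. (Equivalently, this says that the addition morphism is generically finite, which is also the content needed before one can later speak of $\deg(f)$.)

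For the final step I would invoke Debarre's result, quoted just above the corollary, that the sum of the geometrically nondegenerate subvarieties $V$ and $W$ is again geometrically nondegenerate; thus $X=V+W$ is a geometrically nondegenerate subvariety. Since the previous step guarantees that $X$ has codimension one, Example \ref{ex:ample} identifies geometric nondegeneracy of a divisor with ampleness, and I conclude that $X$ is ample.

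I do not anticipate a serious obstacle, as every ingredient is already in place; the only point requiring a moment's care is the exclusion of the branch $V+W=A$ in Lemma \ref{lem:sum}, which follows immediately from $\dim(V\times W)=g-1$. The argument is therefore essentially a formal assembly of the preceding lemma, Debarre's theorem, and Example \ref{ex:ample}.
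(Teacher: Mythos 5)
Your proposal is correct and follows exactly the argument the paper intends: the paper's (very terse) justification is precisely the combination of Debarre's theorem on sums of geometrically nondegenerate subvarieties with Example \ref{ex:ample}, and your use of Lemma \ref{lem:sum} plus the dimension count $\dim(V\times W)=g-1<g$ to rule out $V+W=A$ just makes explicit the step the paper leaves implicit.
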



%
%

\subsection{Generic vanishing subschemes} \label{subsec:pareschi-popa}
Let $(A,\Theta)$ be a $g$-dimensional ppav and let $Z\subseteq A$ be a closed subscheme. 
Following Pareschi and Popa \cite{pareschi-popa}, we say that $Z$ is a GV-subscheme of $A$, if the twisted ideal sheaf $\mathcal I_Z(\Theta)$ is a GV-sheaf. 
By \cite[Theorem 2.1]{pareschi-popa}, this means that the complex
\begin{align*} 
\R\hat{\mathcal S}(\R {\mathcal Hom}(\mathcal I_{Z}(\Theta),\mathcal O_A)) \in D^b(\hat A)
\end{align*}
in the derived category of the dual abelian variety $\hat A$ has zero cohomology in all degrees $i\neq g$. 
Here, $\R\hat{\mathcal S}:\D^b(A)\longrightarrow \D^b(\hat A)$ denotes the Fourier--Mukai transform with respect to the Poincar\'e line bundle. 

The theta dual $V(Z)\subseteq A$ of a subscheme $Z\subset A$ is the scheme theoretic support of 
$$
(-1_{\hat A})^{\ast}R^g \hat{\mathcal S}(\R {\mathcal Hom}(\mathcal I_{Z}(\Theta),\mathcal O_A)) ,
$$
where we use $\Theta$ to identify $\hat A$ with $A$.
Set theoretically, $V(Z)=\{x\in A\mid Z-x\subseteq \Theta\}$, see \cite[p.\ 216]{pareschi-popa}. 
If $Z$ is geometrically nondegenerate of dimension $d$, then $\dim(V(Z))\leq g-d-1$ by Lemma \ref{lem:sum}.
Moreover, if $\dim(V(Z))= g-d-1$ and $\Theta$ is irreducible, then the reduced schemes $Z^{\red}$ and $-V(Z)^{\red}$ contain components $V$ and $W$ with $\Theta=V+W$.

In \cite{pareschi-popa}, Pareschi and Popa state their theorems only for geometrically nondegenerate reduced GV-subschemes of pure dimension. 
However, the same proofs work without the reducedness and pure-dimensionality assumptions. 

\begin{theorem}[Pareschi--Popa \cite{pareschi-popa}] \label{thm:pareschi-popa}
Let $(A,\Theta)$ be a $g$-dimensional ppav and let $Z\subseteq A$ be a geometrically nondegenerate closed GV-subscheme of dimension $d$. 
Then,
\begin{enumerate}
\item $Z$ and $V(Z)$ are pure-dimensional Cohen--Macauly subschemes.  
\item $V(Z)$ is a $(g-d-1)$-dimensional GV-subscheme with $V(V(Z))=Z$.
\item $Z$ and $V(Z)$ have minimal cohomology classes $[Z]=\frac{\theta^{g-d}}{(g-d)!}$ and $[V(Z)]=\frac{\theta^{d+1}}{(d+1)!}$.
\end{enumerate}
\end{theorem}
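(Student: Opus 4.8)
The plan is not to reprove the theorem from scratch but to audit the arguments of Pareschi and Popa \cite{pareschi-popa} and verify that the reducedness and pure-dimensionality hypotheses are never actually used, replacing the few geometric inputs that appear to rely on them by the more general Lemmas \ref{lem:geomnondeg} and \ref{lem:sum} proved above. The key observation is that the GV-condition, the Fourier--Mukai transform $\R\hat{\mathcal S}$, and the definition of the theta dual $V(Z)$ all depend only on the twisted ideal sheaf $\mathcal I_Z(\Theta)$, which is a perfectly good coherent sheaf for any closed subscheme $Z$, whether or not $Z$ is reduced or pure-dimensional. Thus the homological core of the argument is insensitive to these hypotheses, and the task reduces to isolating the genuinely geometric steps.

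First I would treat the homological statements: the Cohen--Macaulayness in item (1), and the GV-property and reflexivity $V(V(Z))=Z$ in item (2). By the characterization \cite[Theorem 2.1]{pareschi-popa}, the GV-condition is equivalent to the concentration of $\R\hat{\mathcal S}(\R{\mathcal Hom}(\mathcal I_Z(\Theta),\OO_A))$ in cohomological degree $g$; this is a purely derived-categorical statement about $\mathcal I_Z(\Theta)$ in which reducedness plays no role. The Cohen--Macaulay and pure-dimensionality conclusions should then be outputs of this concentration, obtained through the base-change and GV-duality formalism of \cite{pareschi-popa-JAMS} together with Grothendieck--Serre duality, rather than inputs; similarly, the involutivity of $\R\hat{\mathcal S}$ yields $V(V(Z))=Z$. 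Here I would simply verify, line by line, that the original derivation of these facts never invokes that $\OO_Z$ is reduced or free of embedded or lower-dimensional components.

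The genuinely geometric inputs are the set-theoretic description $V(Z)=\{x\in A\mid Z-x\subseteq\Theta\}$ and the dimension bound $\dim V(Z)\leq g-d-1$. The latter is exactly an application of Lemma \ref{lem:sum}, which I have proven for arbitrary geometrically nondegenerate subschemes: since $Z-V(Z)\subseteq\Theta$ is contained in a divisor, we have $Z-V(Z)\neq A$, so Lemma \ref{lem:sum} forces $\dim(Z-V(Z))=\dim Z+\dim V(Z)\leq g-1$, giving $\dim V(Z)\leq g-d-1$. Wherever the original argument appeals to nondegeneracy of a reduced pure-dimensional scheme, Lemma \ref{lem:geomnondeg} lets me pass freely between $Z$ and $Z^{\red}$, since the two have the same geometric nondegeneracy. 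The minimal class computation of item (3) is equally robust: as noted in Section \ref{subsec:nondeg}, components of $Z$ of dimension $<d$ contribute nothing to $[Z]\in H^{2g-2d}(A)$, and the class is read off from the numerical invariants of the Fourier--Mukai transform of the same sheaf $\mathcal I_Z(\Theta)$, so the Pontryagin and Poincar\'e-duality bookkeeping of Section \ref{subsec:star} is unchanged.

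The main obstacle I anticipate is confirming that item (1) is truly hypothesis-free: one must ensure that the passage from ``the dual transform is a sheaf concentrated in degree $g$'' to ``$Z$ is pure-dimensional and Cohen--Macaulay'' does not secretly presuppose that $\OO_Z$ has no embedded components. I would pin this down by checking that the concentration statement forces $\R{\mathcal Hom}(\mathcal I_Z(\Theta),\OO_A)$ to be a shift of a single Cohen--Macaulay sheaf directly, equivalently that $\mathcal I_Z(\Theta)$, and hence $\OO_Z$, has the expected homological dimension, so that pure-dimensionality and the absence of embedded points fall out as consequences rather than being assumed at the start.
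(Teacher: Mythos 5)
Your proposal is correct and takes essentially the same route as the paper: the paper's proof is likewise an audit of Pareschi--Popa, observing that their Lemma 4.4, Theorem 5.2 and Theorem 6.1 never use reducedness or pure-dimensionality once the key geometric input $\dim V(Z)\leq g-d-1$ is re-derived from Lemma \ref{lem:sum} (exactly your argument via $Z-V(Z)\subseteq\Theta\neq A$), after which the Cohen--Macaulayness and purity of $V(Z)$, the identity $V(V(Z))=Z$ (whence the same properties for $Z$), and the minimal classes all come out of the Fourier--Mukai formalism. The only nuance the paper makes explicit that you leave implicit is that when transferring the conclusions back to $Z$ via $V(V(Z))=Z$, geometric nondegeneracy of $V(Z)$ need not be checked, since the complementary-dimension condition $\dim Z+\dim V(Z)=g-1$ suffices for that step.
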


\begin{proof} 
Since $Z$ is geometrically nondegenerate, $\dim(V(Z))\leq g-d-1$ by Lemma \ref{lem:sum} above, and so Lemma 4.4 in \cite{pareschi-popa} remains true under our assumptions.
Therefore, the arguments in \cite[Theorem 5.2]{pareschi-popa} prove that the theta dual $V(Z)\subseteq A$ is a closed Cohen--Macauly subscheme of pure dimension $g-d-1$. 
Moreover, $V(Z)$ is a GV-subscheme  of $A$ with $V(V(Z))=Z$ and so $Z$ is also pure-dimensional and Cohen--Macauly.
(Geometric nondegeneracy of $V(Z)$ is not needed here, $\dim(Z)+\dim(V(Z))=g-1$ is enough.)
The final assertion follows as in \cite[Theorem 6.1]{pareschi-popa}. 
\end{proof}

\begin{remark}
Dropping the reducedness assumption on the GV-subschemes in \cite{pareschi-popa} seems to be necessary, because the theta dual of a reduced GV-subscheme might a priori be nonreduced. 
\end{remark}

\begin{remark}
Theta duality has further been investigated by Gulbrandsen--Lahoz \cite{gulbrandsen-lahoz}; embeddings of GV-subschemes in ppavs have been studied by Lombardi--Tirabassi \cite{lombardi-tirabassi}. 
\end{remark}

\section{Endomorphisms attached to two subvarieties of an abelian variety} \label{sec:P} 
Let $V$ and $W$ be closed subvarieties of an abelian variety $A$. 
Suppose that the sum $X:=V+W$ has the expected dimension $\dim(X)=\dim(V)+\dim(W)$.
Then the addition morphism $f:V\times W\longrightarrow X$ is a generically finite map, and so, for a general point $x\in X$,
\[
f^{-1}(x)=\left\{(v_1,w_1),\dots ,(v_s,w_s)\right\} ,
\]
where $s:=\deg(f)$.
Since $x\in X$ is general, it is a smooth point of $X$ and $f$ is unramified above $x$.
Therefore, 
\[
T_{V,v_i}\oplus T_{W,w_i}=T_{X,x}
\]
for all $i=1,\ldots ,s$.
This decomposition gives rise to a projector
\[
P_i:T_{X,x} \longrightarrow T_{X,x}
\]
with kernel $T_{W,w_i}$ and image $T_{V,v_i}$.
Taking the $k$-th exterior power yields an endomorphism
\begin{align} \label{def:c(V,W)}
c_k(V,W)(x):=\sum_{i=1}^s\Lambda^kP_i: \Lambda^kT_{X,x} \longrightarrow \Lambda^kT_{X,x} .
\end{align} 

\begin{remark}
The endomorphism in (\ref{def:c(V,W)}) generalizes Ran's work \cite[Section I.1]{ran}.
He considered complementary dimensional subvarieties $V,W\subset A$, which meet transversely in smooth points, and constructed an endomorphism of $\Lambda ^k T_{A,0}$ which coincides (under these assumptions) with $c_k(V,-W)(0)$ above.
\end{remark}

Somewhat surprisingly, the following result shows that the above endomorphism can be computed quite explicitly in the case where $X$ is an ample divisor with only rational singularities.
This result is the key step in the proof of Theorem \ref{thm:P}.

\begin{theorem} \label{thm:prop:P} 
Let $A$ be a $g$-dimensional abelian variety and let $V,W\subset A$ be closed subvarieties of respective positive dimensions $d$ and $g-1-d$.
If $X=V+W$ is an ample divisor on $A$ with only rational singularities, then, for all $1\leq k\leq d$,  
$$
c_k(V,W)(x)=\lambda_k\cdot \id ,
$$
for some nonzero constant $\lambda_k\in \Q^{\times}$ which does not depend on $x$.  
\end{theorem}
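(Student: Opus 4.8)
The plan is to trace the endomorphism against invariant holomorphic forms, realise $c_k(V,W)(x)$ as the fibrewise restriction of \emph{one} fixed operator on $H^0(A,\Omega^k_A)$, and then force that operator to be scalar by a rigidity argument using the Gauss map. Throughout write $T:=T_{A,0}$, so that $H^0(A,\Omega^k_A)=\Lambda^kT^\ast$, and let $\pr_V\colon V\times W\to V$, $i_V\colon V\hookrightarrow A$ be the obvious maps; I denote by $(-)^t$ the transpose acting on dual spaces. First I would unwind the definition: under $df_{(v_i,w_i)}\colon T_{V,v_i}\oplus T_{W,w_i}\xrightarrow{\sim}T_{X,x}$ the projector $P_i$ is the one induced by $d\pr_V\circ df^{-1}_{(v_i,w_i)}$, so for $\omega\in\Lambda^kT^\ast$ the pointwise trace formula of Section~\ref{subsec:trace}, applied to $\pr_V^\ast i_V^\ast\omega$, reads
\[
\big(f_\ast\,\pr_V^\ast i_V^\ast\omega\big)_x \;=\; \sum_{i=1}^{s}\big(\omega|_{T_{X,x}}\big)\circ\Lambda^kP_i \;=\; c_k(V,W)(x)^{t}\big(\omega|_{T_{X,x}}\big)
\]
as elements of $\Lambda^kT_{X,x}^\ast$. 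Since the restriction $\Lambda^kT^\ast\to\Lambda^kT_{X,x}^\ast$ is surjective at the smooth point $x$, the operator $c_k(V,W)(x)^{t}$ is completely determined by the single global form $f_\ast\,\pr_V^\ast i_V^\ast\omega$.

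Next I would control that global form using the two hypotheses. Because $X$ has rational singularities, every holomorphic $k$-form on $X^{\sm}$ extends to a resolution $\pi\colon\widetilde X\to X$, so $f_\ast\,\pr_V^\ast i_V^\ast\omega\in H^0(\widetilde X,\Omega^k_{\widetilde X})$. Because $\OO_A(X)$ is ample and $\Omega^k_A$ is trivial, Kodaira--Nakano vanishing together with the rationality of the singularities gives, in the Lefschetz range $k\le \dim X-1=g-2$ (which holds as $k\le d\le g-2$), an isomorphism $j^\ast\colon H^0(A,\Omega^k_A)\xrightarrow{\sim}H^0(\widetilde X,\Omega^k_{\widetilde X})$, where $j\colon\widetilde X\to A$. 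Hence there is a well-defined endomorphism
\[
\Phi\colon \Lambda^kT^\ast\to\Lambda^kT^\ast,\qquad j^\ast\big(\Phi(\omega)\big)=f_\ast\,\pr_V^\ast i_V^\ast\omega ,
\]
independent of $x$, and the previous display becomes $c_k(V,W)(x)^{t}(\omega|_{T_{X,x}})=\Phi(\omega)|_{T_{X,x}}$ for all $\omega$ and all general $x$.

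Now comes the decisive point. Fix a general $x$ and let $\xi\in T^\ast$ span the conormal line $N^\ast_{X,x}$, so that $\ker\big(\Lambda^kT^\ast\to\Lambda^kT_{X,x}^\ast\big)=\xi\wedge\Lambda^{k-1}T^\ast$. If $\omega|_{T_{X,x}}=0$ then the left-hand side of the identity vanishes, hence $\Phi(\omega)|_{T_{X,x}}=0$; that is, $\Phi$ preserves $\xi\wedge\Lambda^{k-1}T^\ast$. Since $X$ is ample its Gauss map is dominant, so the conormal directions $\xi=N^\ast_{X,x}$ are Zariski-dense in $\CP(T^\ast)$, and by continuity $\Phi$ preserves $\xi\wedge\Lambda^{k-1}T^\ast$ for \emph{every} $\xi\in T^\ast$. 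This forces $\Phi$ to be scalar: for independent $\xi_1,\dots,\xi_k$ one has $\bigcap_{j}\big(\xi_j\wedge\Lambda^{k-1}T^\ast\big)=\C\cdot(\xi_1\wedge\dots\wedge\xi_k)$, so $\Phi$ scales every decomposable $k$-covector, and additivity in each slot (using $\dim T=g\ge k+1$) shows the scaling factor is one and the same constant $\lambda_k$. Therefore $\Phi=\lambda_k\cdot\id$, whence $c_k(V,W)(x)^{t}=\lambda_k\cdot\id$ and finally $c_k(V,W)(x)=\lambda_k\cdot\id$, with $\lambda_k$ visibly independent of $x$.

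It remains to pin down $\lambda_k$, which I would do by taking traces. As $\rk P_i=\dim V=d\ge k$, each $\Lambda^kP_i$ is a projector of rank $\binom{d}{k}$, so $\operatorname{tr}c_k(V,W)(x)=\sum_{i=1}^{s}\binom{d}{k}=\deg(f)\binom{d}{k}$, while $\dim\Lambda^kT_{X,x}=\binom{g-1}{k}$; hence
\[
\lambda_k=\frac{\deg(f)\binom{d}{k}}{\binom{g-1}{k}}\in\Q^{\times},
\]
nonzero because $\deg(f)\ge1$ and $k\le d$. I expect the main obstacle to be the Hodge-theoretic isomorphism $j^\ast\colon H^0(A,\Omega^k_A)\xrightarrow{\sim}H^0(\widetilde X,\Omega^k_{\widetilde X})$: this is precisely where both hypotheses are spent---rational singularities to extend the trace forms across the singular locus of $X$, and ampleness to run Kodaira--Nakano vanishing in the range $k\le g-2$---and it has to be set up with care for singular $X$. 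By contrast, the conormal-invariance that yields scalarity is robust once that isomorphism is in hand, and the evaluation of $\lambda_k$ is then purely formal.
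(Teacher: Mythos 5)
Your overall strategy is the same as the paper's: you realize the transpose $c_k^t(V,W)(x)$ as the fibrewise restriction of a single operator $\Phi$ on $H^{k,0}(A)$ via the trace map and the isomorphism $j^\ast\colon H^{k,0}(A)\stackrel{\sim}{\longrightarrow} H^{k,0}(\widetilde X)$ coming from rational singularities plus Kodaira vanishing (this is Lemma \ref{lem:Hk0X} combined with Lemma \ref{lem:psi} of the paper), and you then force $\Phi$ to be scalar from the invariance $\Phi\bigl(\xi\wedge\Lambda^{k-1}T^{\ast}\bigr)\subseteq \xi\wedge\Lambda^{k-1}T^{\ast}$ for all $\xi$, using dominance of the Gauss map of the ample divisor $X$ (this is exactly Proposition \ref{prop:psi}). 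Where you genuinely depart from the paper is in proving $\lambda_k\neq 0$ and $\lambda_k\in\Q$: you take traces, observing that each $\Lambda^k P_i$ is a projector of rank $\binom{d}{k}$, so that $\lambda_k\binom{g-1}{k}=\operatorname{tr}\,c_k(V,W)(x)=\deg(f)\binom{d}{k}$, whence $\lambda_k=\deg(f)\binom{d}{k}\big/\binom{g-1}{k}\in\Q_{>0}$. This is correct and noticeably more economical than the paper's route, which establishes nonvanishing topologically --- via the identity $j_\ast\circ\tilde\mu_\ast\circ q^\ast(\alpha)=(\alpha\cup[V])\star[W]$ of Lemma \ref{lem:ast} together with the lengthy Poincar\'e-duality computation of Lemma \ref{lem:nonzero} --- and then obtains rationality separately by extending $\psi$ to an endomorphism of the transcendental lattice $T^k(A)$. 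Your trace argument delivers nonvanishing, rationality, and an explicit value of $\lambda_k$ in one stroke, and it is compatible with the paper's later use of the theorem (for $k=d$ it gives $\lambda_d=\deg(f)\big/\binom{g-1}{d}$, consistent with the bound $\deg(f)\geq\binom{g-1}{d}$).

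One step of your write-up needs repair. You assert that $f_\ast\,\pr_V^\ast i_V^\ast\omega$, a priori a holomorphic form on $X^{\sm}$, extends to the resolution $\widetilde X$ ``because $X$ has rational singularities.'' Extension of holomorphic $k$-forms (for $k<\dim X$) from the regular locus to a resolution is not a formal consequence of rational singularities available in this paper's toolkit: in that generality it is a deep theorem of Kebekus and Schnell which postdates the paper; in the case at hand one could instead note that $X$, being a divisor in a smooth variety, is Gorenstein, so rational implies canonical by Elkik's theorem and the extension theorem of Greb--Kebekus--Kov\'acs--Peternell applies --- still a very large hammer. The paper sidesteps the issue entirely: one blows up $\widetilde V\times\widetilde W$ so that the addition map lifts to a generically finite morphism $\tilde\mu\colon\widetilde{V\times W}\longrightarrow\widetilde X$ of smooth varieties, as in diagram (\ref{diag:mu}), and defines the relevant class as $\tilde\mu_\ast(q^\ast\omega)\in H^0(\widetilde X,\Omega^k_{\widetilde X})$; this is a global form on $\widetilde X$ by construction of the trace map, and it agrees with $r^\ast\bigl(f_\ast\,\pr_V^\ast i_V^\ast\omega\bigr)$ over the locus where $r$ is an isomorphism. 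With this substitution --- your pointwise identification then being exactly Lemma \ref{lem:psi} --- your proof is complete and elementary.
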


The remainder of this section is devoted to the proof of the above theorem; the notation will always be that of the theorem.

\subsection{Singularities of $X$}
The assumptions on the singularities of $X$ imply the following well-known result, where $j:\widetilde X \longrightarrow A$ denotes the composition of a resolution of singularities $r:\widetilde X\longrightarrow X$ with the inclusion $h:X\longrightarrow A$.

\begin{lemma} \label{lem:Hk0X} 
The pullback map $j^\ast: H^{0}(A,\Omega_A^k)\longrightarrow H^0(\widetilde X,\Omega_{\widetilde X}^k)$ is an isomorphism for all $k\leq g-2$ and injective for $k=g-1$.
Moreover, $j^{\ast}$ is surjective for $k=g-1$ if and only if $h^{0}(A,\mathcal O_A(X))=1$.
\end{lemma}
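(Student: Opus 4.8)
The plan is to reduce the statement about holomorphic $k$-forms to the conjugate statement about the coherent cohomology groups $H^k(-,\mathcal O)$, where the ampleness of $X$ and Kodaira vanishing can be applied directly. Since $A$ and $\widetilde X$ are smooth and projective and $j$ is a morphism of complex varieties, the pullback $j^\ast\colon H^k(A,\C)\to H^k(\widetilde X,\C)$ is a morphism of pure Hodge structures that commutes with complex conjugation. Hence its restriction to $H^{k,0}(A)=H^0(A,\Omega_A^k)$ and its restriction to the conjugate subspace $H^{0,k}(A)=H^k(A,\mathcal O_A)$ are complex conjugate linear maps; in particular they have the same rank, kernel dimension and cokernel dimension, using $H^{0,k}(\widetilde X)=H^k(\widetilde X,\mathcal O_{\widetilde X})$ on the target. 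Thus it suffices to prove that the pullback $H^k(A,\mathcal O_A)\to H^k(\widetilde X,\mathcal O_{\widetilde X})$ is an isomorphism for $k\le g-2$, injective for $k=g-1$, and surjective for $k=g-1$ if and only if $h^0(A,\mathcal O_A(X))=1$.

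Next I would remove the resolution from the target. Because $X$ has only rational singularities, $r_\ast\mathcal O_{\widetilde X}=\mathcal O_X$ and $R^ir_\ast\mathcal O_{\widetilde X}=0$ for $i>0$, so the Leray spectral sequence for $r$ degenerates and the pullback $r^\ast\colon H^k(X,\mathcal O_X)\to H^k(\widetilde X,\mathcal O_{\widetilde X})$ is an isomorphism for every $k$. Since $j^\ast=r^\ast\circ h^\ast$ on $\mathcal O$-cohomology, the problem becomes one about the restriction map $h^\ast\colon H^k(A,\mathcal O_A)\to H^k(X,\mathcal O_X)$.

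To analyze $h^\ast$ I would use the structure sequence $0\to\mathcal O_A(-X)\to\mathcal O_A\to\mathcal O_X\to0$ together with Kodaira vanishing: as $\mathcal O_A(X)$ is ample and $\omega_A=\mathcal O_A$, Serre duality gives $H^q(A,\mathcal O_A(-X))=0$ for all $q<g$. Feeding this into the long exact sequence immediately yields that $h^\ast$ is an isomorphism for $k\le g-2$ (both neighbouring groups of $\mathcal O_A(-X)$ vanish) and injective for $k=g-1$. For the surjectivity at $k=g-1$ the relevant tail of the sequence reads
\[
H^{g-1}(A,\mathcal O_A)\to H^{g-1}(X,\mathcal O_X)\xrightarrow{\ \delta\ } H^{g}(A,\mathcal O_A(-X))\to H^{g}(A,\mathcal O_A)\to 0,
\]
using $H^g(X,\mathcal O_X)=0$ since $\dim X=g-1$. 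The last arrow is Serre dual to the multiplication-by-section map $H^0(A,\mathcal O_A)\to H^0(A,\mathcal O_A(X))$, which is injective because the defining section of $X$ is nonzero; hence the cokernel of $h^\ast$ in degree $g-1$ has dimension $h^0(A,\mathcal O_A(X))-1$. Therefore $h^\ast$ is surjective in degree $g-1$ exactly when $h^0(A,\mathcal O_A(X))=1$, completing the reduction.

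The main obstacle is the bookkeeping at $k=g-1$: everything for $k\le g-2$ is a formal consequence of vanishing, but the surjectivity criterion requires correctly identifying the connecting homomorphism $\delta$, equivalently recognizing the final map $H^g(A,\mathcal O_A(-X))\to H^g(A,\mathcal O_A)$ as the Serre dual of multiplication by the defining section of $X$, so that its kernel has the predicted dimension $h^0(A,\mathcal O_A(X))-1$. A secondary point to verify is that the conjugation argument of the first paragraph legitimately transfers injectivity and surjectivity from $\mathcal O$-cohomology to holomorphic forms; this relies only on $j^\ast$ being defined over $\R$ and strictly compatible with the Hodge decompositions of the smooth projective varieties $A$ and $\widetilde X$.
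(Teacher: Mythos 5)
Your proposal is correct and follows essentially the same route as the paper's proof: use rational singularities and the Leray spectral sequence to identify $H^k(X,\mathcal O_X)\cong H^k(\widetilde X,\mathcal O_{\widetilde X})$, apply the structure sequence of $X$ together with Kodaira vanishing and Serre duality to analyze the restriction from $A$, and transfer the result to holomorphic forms via complex conjugation and the Hodge decomposition. Your identification of the map $H^g(A,\mathcal O_A(-X))\to H^g(A,\mathcal O_A)$ as the Serre dual of multiplication by the defining section merely makes explicit a step the paper leaves implicit.
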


\begin{proof}
Since $X$ has only rational singularities, it is normal and $R^i r_\ast \mathcal O_{\widetilde X}=0$ for $i\geq 1$.
Using the Leray spectral sequence, it follows that the pullback map 
\begin{align} \label{eq:g*}
r^{\ast}:H^k(X,\mathcal O_X)\stackrel{\sim}\longrightarrow  H^k(\widetilde X,\mathcal O_{\widetilde X}) 
\end{align} 
is an isomorphism for all $k$.

We have the following short exact sequence
$$
0\longrightarrow \mathcal O_A(-X)\longrightarrow \mathcal O_A \longrightarrow \mathcal O_X \longrightarrow 0 .
$$
By Kodaira vanishing, the restriction map $H^k(A,\mathcal O_A)\longrightarrow H^k(X,\mathcal O_X)$ is an isomorphism for all $k\leq g-2$ and injective for $k=g-1$.
Together with (\ref{eq:g*}), this proves that
$$
j^{\ast}: H^k(A,\mathcal O_A)\stackrel{\sim}\longrightarrow H^k(\widetilde X,\mathcal O_{\widetilde X})
$$
is an isomorphism for all $k\leq g-2$ and injective for $k=g-1$. 
Moreover, $j^{\ast}$ is surjective for $k=g-1$ if and only if the surjection
$
H^{g}(A,\mathcal O_A(-X))\longrightarrow H^g(A,\mathcal O_A)
$
is injective, which by Serre duality is equivalent to  $h^{0}(A,\mathcal O_A(X))=1$.
The lemma follows now via complex conjugation from the Hodge decomposition theorem. 
\end{proof}

\subsection{Construction of a useful correspondence}
Let $\widetilde V\longrightarrow V$ and $\widetilde W\longrightarrow W$  be resolutions of singularities, and let $i:\widetilde V\longrightarrow A$ denote the composition of the resolution $\widetilde V\longrightarrow V$  with the embedding of $V$ in $A$.

We consider the following commutative diagram
\begin{align} \label{diag:mu}
\begin{xy}
  \xymatrix{
A& \ar[l]^{i} \widetilde V		 &  \ar[l]^{\pr_1}\widetilde V\times \widetilde W \ar[d]^{\mu}   &  	\ar[l]^{\tilde r} 	\widetilde{ V\times W}\ar[d]^{\tilde \mu}	\\
 &	A		&  \ar[l]^{h}			X 							& \ar[l]^r		\widetilde{X}	.
  }	
\end{xy}
\end{align}
Here, $\mu$ is the composition of $\widetilde V\times \widetilde W\longrightarrow V\times W$ with the addition morphism $f:V\times W\longrightarrow X$, and $\tilde r$ is a sequence of blow-ups along smooth centers such that $\mu$ admits a lift $\tilde \mu: \widetilde{ V\times W}\longrightarrow \widetilde X $, where $r:\widetilde X\longrightarrow X$ is the resolution of singularities of $X$ from above. 

Denoting by $q:	\widetilde{ V\times W} \longrightarrow A$ the composition of $\tilde r$, $\pr_1$ and $i$ in the first row of (\ref{diag:mu}), we obtain the following short version of (\ref{diag:mu}):
\begin{align*} 
  \xymatrix{
A& \ar[l]^{q} 	\widetilde{ V\times W}\ar[d]^{\tilde \mu}	\\
	&			\widetilde{X}	.
  }
\end{align*}
The main idea of the proof of Theorem \ref{thm:prop:P} is to study the homomorphism 
$$
\tilde \mu_\ast\circ q^\ast:H^k(A,\C)\longrightarrow H^k(\widetilde X,\C)
$$ 
on a purely topological level and to compare this with the result one obtains by studying the above map on the level of holomorphic forms. 
This approach is motivated by Ran's work \cite{ran}, but additional difficulties arise in our situation.
For instance, due to possible singularities of $X$, we have in general no control over $H^k(\widetilde X,\C)$ and so it is hard to compute $\tilde \mu_{\ast}\circ q^{\ast}$ directly. 
However, once we apply $j_\ast$, a concrete description is given by Lemma \ref{lem:ast} below.

\subsection{Topological computations} \label{subsec:topo} 
The following lemma computes  $j_\ast\circ \tilde \mu_\ast\circ q^\ast$ in terms of the Pontryagin product $\star$, cf.\ Section \ref{subsec:star}.

\begin{lemma} \label{lem:ast}
The map $j_\ast\circ \tilde \mu_\ast\circ q^\ast:H^k(A,\C)\longrightarrow H^{k+2}(A,\C)$ is given by
\[
j_\ast\circ \tilde \mu_\ast\circ q^\ast(\alpha)= (\alpha\cup [V]) \star [W] .
\]
\end{lemma}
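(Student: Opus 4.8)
The plan is to unwind the correspondence $q$ and $\tilde\mu$ into operations on cohomology classes of $A$ and $\widehat A$, using only the functoriality of Gysin pushforward and pullback together with the definition of the Pontryagin product. The key observation is that the composite $j_\ast \circ \tilde\mu_\ast \circ q^\ast$ is, up to birational modifications that do not affect cohomology, nothing but the correspondence induced by the ``graph'' of the addition map realized inside $A \times A$. So I would first argue that the blow-ups $\tilde r$ and the resolutions $\widetilde V, \widetilde W$ are harmless: since pushforward composed with pullback along a birational morphism of smooth proper varieties is the identity, we may replace $\widetilde{V\times W}$ by $\widetilde V \times \widetilde W$ and $\widetilde X$ by $X$ (or rather track everything through $j = h \circ r$ so that the final Gysin map lands back in $A$). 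Concretely, $j_\ast \circ \tilde\mu_\ast = h_\ast \circ r_\ast \circ \tilde\mu_\ast = h_\ast \circ \mu_\ast \circ \tilde r_\ast$, and $\tilde r_\ast \circ \tilde r^\ast = \id$ on $\widetilde V \times \widetilde W$, so the whole thing collapses to $h_\ast \circ \mu_\ast \circ (\pr_1 \circ \tilde r)^\ast \circ i^\ast$ and then, absorbing $\tilde r$, to $h_\ast \circ \mu_\ast \circ \pr_1^\ast \circ i^\ast$ acting on $H^k(A,\C)$.

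Next I would identify $h_\ast \circ \mu_\ast$ with the Gysin map of the addition composite and rewrite $\mu$ in terms of the multiplication map $m : A \times A \longrightarrow A$. The point is that $\mu : \widetilde V \times \widetilde W \longrightarrow X$ factors (after composing with $h$) through $m$ restricted to $V \times W$; equivalently, letting $i : \widetilde V \longrightarrow A$ and the analogous map for $W$ present the fundamental classes $[V]$ and $[W]$ via $i_\ast i^\ast = \cup [V]$, one computes that
\begin{align*}
h_\ast \mu_\ast \pr_1^\ast i^\ast(\alpha) = m_\ast\bigl( \pr_1^\ast(\alpha \cup [V]) \cup \pr_2^\ast [W]\bigr).
\end{align*}
Here the factor $\alpha \cup [V]$ arises because pulling back $\alpha$ along the first projection and pushing forward along $i$ on the $\widetilde V$ factor produces $\alpha \cup [V]$ by the projection formula, while the second factor $\widetilde W \longrightarrow A$ contributes $[W]$ by its fundamental class. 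Comparing the right-hand side with the definition of the Pontryagin product in Section \ref{subsec:star}, namely $\beta \star \gamma = m_\ast(\pr_1^\ast \beta \cup \pr_2^\ast \gamma)$, I would read off exactly $(\alpha \cup [V]) \star [W]$, which is the claim.

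The main technical obstacle is the careful bookkeeping of the Gysin pushforwards through the singular variety $X$ and its resolution: one must justify that $j_\ast = h_\ast \circ r_\ast$ makes sense and behaves functorially even though $X$ is only assumed normal with rational singularities, and that the lift $\tilde\mu$ together with $\tilde r$ really allows the cancellation $r_\ast \tilde\mu_\ast = \mu_\ast \tilde r_\ast$ followed by $\tilde r_\ast \tilde r^\ast = \id$. The clean way to sidestep singularity issues entirely is to observe that both source and target of the asserted identity live in $H^\ast(A,\C)$, which is smooth, so that after applying $j_\ast$ every intermediate class can be pushed all the way to $A$ and the computation becomes one of correspondences between smooth proper varieties, where the projection formula and compatibility of Gysin maps with composition are standard (cf.\ Section \ref{subsec:trace} and \cite[Section 7.3.2]{voisin1}). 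Once the diagram is flattened to maps among smooth varieties, the remaining steps are the routine projection-formula manipulations sketched above, and I would not belabor them.
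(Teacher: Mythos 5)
Your proposal is correct and follows essentially the same route as the paper: both collapse the blow-ups via $\tilde r_\ast\circ\tilde r^\ast=\id$ to reduce the claim to computing $h_\ast\circ\mu_\ast\circ\pr_1^\ast\circ i^\ast$, and then identify this map with $\alpha\mapsto(\alpha\cup[V])\star[W]$ using the factorization of $h\circ\mu$ through the multiplication map $m$ and the identity $i_\ast i^\ast\alpha=\alpha\cup[V]$. The only difference is presentational: you phrase the final identification via the projection formula and product-compatibility of Gysin maps in cohomology, whereas the paper performs the equivalent computation explicitly in homology, using Poincar\'e duality and the cross-product decomposition of $\operatorname{D}_{\widetilde V\times\widetilde W}(\pr_1^\ast i^\ast\alpha)$.
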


\begin{proof} 
By construction, $\tilde r:\widetilde{V\times W}\longrightarrow \widetilde V\times \widetilde W$ is a sequence of blow-ups along smooth centers.
By the formula for the cohomology of such blow-ups \cite[p.\ 180]{voisin1},
\[
\tilde r_\ast\circ \tilde r^\ast:H^k( \widetilde V\times \widetilde W,\C)\longrightarrow H^k( \widetilde V\times \widetilde W,\C) 
\]
is the identity.
Using the commutativity of the right square in (\ref{diag:mu}), this yields  
\begin{align} \label{eq:mu}
 r_\ast \circ \tilde \mu_\ast \circ \tilde r^\ast= \mu_\ast \circ \tilde r_\ast \circ   \tilde r^\ast=\mu_\ast .
\end{align}
Since $q^\ast=\widetilde r^{\ast}\circ \pr_1^{\ast} \circ i^{\ast}$ and $j_{\ast}=h_{\ast}\circ r_{\ast}$, this implies  
\begin{align} \label{eq:lem:ast}
j_\ast\circ \tilde \mu_\ast\circ q^\ast = h_\ast\circ \mu_\ast \circ \pr_1^\ast\circ i^\ast :H^k(A,\C)\longrightarrow H^{k+2}(A,\C) .
\end{align}

We denote the Poincar\'e duality isomorphisms on $A$, $\widetilde V$ and $\widetilde V\times \widetilde W$, given by cap product with the corresponding fundamental classes, by $\operatorname{D}_A$, $\operatorname{D}_{\widetilde V}$ and $\operatorname{D}_{\widetilde V\times \widetilde W}$ respectively. 
Let $\alpha\in H^k(A,\C)$ and consider the pullback $\pr_1^\ast ( i^\ast \alpha)$ to $\widetilde V\times \widetilde W$. 
Then,
\begin{align} \label{eq:D_VxW}
\operatorname{D}_{\widetilde V\times \widetilde W}(\pr_1^\ast ( i^\ast \alpha)) =\operatorname{D}_{\widetilde V}(i^\ast \alpha) \otimes [\widetilde W]\in H_{2d-2k}(\widetilde V, \C)\otimes H_{2g-2d-2}(\widetilde W, \C), 
\end{align}
where we use the cross product on homology with coefficients in a field to identify $ H_{2d-2k}(\widetilde V, \C)\otimes H_{2g-2d-2}(\widetilde W, \C)$ with a direct summand of $H_{2g-2-k}(\widetilde V\times \widetilde W,\C)$. 
The pushforward of the above homology class to $A$ is given by
\begin{align*}
h_\ast\circ \mu_\ast (\operatorname{D}_{\widetilde V}(i^\ast \alpha) \otimes [\widetilde W])&=(i_{\ast}\operatorname{D}_{\widetilde V}(i^\ast \alpha)) \star [W] \in H_{2g-2-k}(A,\C) .
\end{align*} 
Using (\ref{eq:lem:ast}) and (\ref{eq:D_VxW}), this proves by the definition of the Gysin morphisms $(h\circ \mu)_{\ast}$ and $i_{\ast}$ (cf. \cite[\S 7.3.2]{voisin1}),
\begin{align*}
j_\ast\circ \tilde \mu_\ast\circ q^\ast (\alpha) 
&=(h\circ \mu)_\ast (\pr_1^\ast ( i^\ast(\alpha))) \\
&=\operatorname{D}_A( (h\circ \mu)_{\ast} (\operatorname D_{\widetilde V\times \widetilde W} (\pr_1^\ast(i^\ast\alpha) ) ) ) \\
&=\operatorname{D}_A( (h \circ \mu)_{\ast} ( \operatorname{D}_{\widetilde V}(i^\ast \alpha) \otimes [\widetilde W] ) ) \\
&= \operatorname{D}_{A}(i_{\ast}\operatorname{D}_{\widetilde V}(i^\ast \alpha)) \star [W] \\
&= (i_{\ast}i^{\ast}(\alpha)) \star [W] \\
&= (\alpha\cup [V] )\star [W].
\end{align*}
This finishes the proof of the lemma.
\end{proof}

\begin{remark}
If we assume that $V$ and $W$ are non-degenerate, then it follows easily from (\ref{eq:PD}) that the  homomorphism from Lemma \ref{lem:ast} is nonzero. 
The following lemma shows that the same statement holds without additional assumptions on $V$ or $W$.
\end{remark}

\begin{lemma} \label{lem:nonzero}
For $0\leq k\leq d$, there is a class $\omega\in H^{k,0}(A)$ with $(\omega\cup [V]) \star [W]\neq 0$.
\end{lemma}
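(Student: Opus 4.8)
The plan is to exploit the formula of Lemma \ref{lem:ast}, which identifies the class $(\omega\cup[V])\star[W]$ with $j_\ast\circ\tilde\mu_\ast\circ q^\ast(\omega)$, and to evaluate the intermediate form $\tilde\mu_\ast q^\ast\omega$ on $\widetilde X$ pointwise. Since $\widetilde{V\times W}$ and $\widetilde X$ are smooth, proper and of the same dimension $g-1$, the Gysin map $\tilde\mu_\ast$ preserves the Hodge type $(k,0)$ and, restricted to holomorphic $k$-forms, coincides with the trace map of Section \ref{subsec:trace}. For $\omega\in H^{k,0}(A)$ the class $q^\ast\omega$ is of type $(k,0)$, so $\tilde\mu_\ast q^\ast\omega\in H^{k,0}(\widetilde X)=H^0(\widetilde X,\Omega^k_{\widetilde X})$ is a genuine holomorphic form. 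First I would reduce the lemma to the nonvanishing of this form. Because $1\le d\le g-2$ forces $k\le g-2$, Lemma \ref{lem:Hk0X} gives $H^{k,0}(\widetilde X)=j^\ast H^{k,0}(A)$, while the projection formula yields $j_\ast\circ j^\ast=\,\cup\,[X]$, which is injective on $H^{k,0}(A)$ by the Hard Lefschetz theorem since $X$ is ample. Hence $j_\ast$ is injective on $H^{k,0}(\widetilde X)$, and it suffices to produce $\omega$ with $\tilde\mu_\ast q^\ast\omega\neq 0$.

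Next I would compute $\tilde\mu_\ast q^\ast\omega$ at a general point $x\in X$ with $f^{-1}(x)=\{(v_1,w_1),\dots,(v_s,w_s)\}$ and $s=\deg(f)$. Over such $x$ the blow-up $\tilde r$ and the lift $\tilde\mu$ are isomorphisms, and $d\tilde\mu$ identifies $T_{V,v_i}\oplus T_{W,w_i}$ with $T_{X,x}$. Since $\omega$ is translation invariant and $q^\ast\omega$ depends only on the $\widetilde V$-factor, the trace formula gives $(\tilde\mu_\ast q^\ast\omega)_x=\omega\circ c_k(V,W)(x)$, where $c_k(V,W)(x)=\sum_i\Lambda^kP_i$ is exactly the endomorphism of $\Lambda^kT_{X,x}$ from (\ref{def:c(V,W)}) and $\omega$ is understood as its restriction to $\Lambda^kT_{X,x}$. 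Thus the form is nonzero at $x$ as soon as $c_k(V,W)(x)\neq 0$ and $\omega$ is chosen so as not to annihilate its image; as the restriction map $\Lambda^kT_{A,0}^\ast\to\Lambda^kT_{X,x}^\ast$ is surjective, such a global $\omega$ always exists.

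It therefore remains to show $c_k(V,W)(x)\neq 0$ for $0\le k\le d$, which I would establish by computing its linear-algebra trace. Each $P_i$ is a rank-$d$ projector of the $(g-1)$-dimensional space $T_{X,x}$, so its eigenvalues are $1$ with multiplicity $d$ and $0$ otherwise, whence $\operatorname{tr}(\Lambda^kP_i)=\binom{d}{k}$. Summing over the $s$ preimages gives $\operatorname{tr}(c_k(V,W)(x))=\deg(f)\binom{d}{k}$, which is strictly positive for $0\le k\le d$; in particular $c_k(V,W)(x)\neq 0$, as required. The main obstacle is the second paragraph: one must justify the pointwise identification of the analytic trace with $\omega\circ c_k(V,W)(x)$, verifying that the resolution $\tilde r$ and the lift $\tilde\mu$ are isomorphisms over a general point of $X$ so that the tangent-space decomposition and the projectors $P_i$ are precisely those entering (\ref{def:c(V,W)}). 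Once this bookkeeping is in place, the trace computation and the Hard Lefschetz reduction are immediate.
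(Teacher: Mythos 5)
Your proof is correct, but it follows a genuinely different route from the paper's. The paper stays entirely in cohomology: it applies the isomorphism $\PD$ of Section \ref{subsec:star} to convert the Pontryagin product into a cup product on $\hat A$, notes that $[V]\star[W]\neq 0$ because $f$ is generically finite, and then performs an explicit coordinate computation producing the identity $\sum_L dz_L^{\ast}\cup\PD(dz_L\cup[V])=\binom{d}{k}\PD([V])$, which forces $\PD(\omega\cup[V])\cup\PD([W])\neq 0$ for some $\omega$. You instead argue geometrically: you use Lemma \ref{lem:Hk0X} together with the projection formula and Hard Lefschetz to see that $j_{\ast}$ is injective on $H^{k,0}(\widetilde X)$, thereby reducing the lemma to the nonvanishing of the trace form $\tilde\mu_{\ast}q^{\ast}\omega$; you then identify this form at a general point with $\pi_x(\omega)\circ c_k(V,W)(x)$ (this is exactly the content of the paper's Lemma \ref{lem:psi}, which appears later but is logically independent of Lemma \ref{lem:nonzero}, so there is no circularity, though you should be explicit that you are re-deriving it); finally you conclude from $\operatorname{tr}(c_k(V,W)(x))=\deg(f)\binom{d}{k}>0$ and the surjectivity of $\Omega^k_{A,x}\to\Omega^k_{X,x}$. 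Each approach has its merits: the paper's computation is elementary and uses only that $f$ is generically finite, with no appeal to ampleness or rational singularities of $X$, so it would survive in more general settings; your argument leans on the full hypotheses of Theorem \ref{thm:prop:P} (Kodaira vanishing via Lemma \ref{lem:Hk0X}, Hard Lefschetz via ampleness) but replaces the lengthy sign-and-index bookkeeping by a transparent positivity statement, the trace of a sum of projectors. It is a pleasant coincidence worth noting that both proofs ultimately rest on the same nonvanishing $\binom{d}{k}\neq 0$: in the paper it is the constant in the coordinate identity, in your proof it is the rank of $\Lambda^k P_i$.
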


\begin{proof} 
Recall from Section \ref{subsec:star} the isomorphisms 
$
\PD:H^{i}(A,\C)\stackrel{\sim} \longrightarrow H^{2g-i}(\hat A,\C)
$, 
induced by $\omega \mapsto \int_A \omega\cup -$ and Poincar\'e duality.
By (\ref{eq:PD}), $\PD$ exchanges the Pontryagin and the cup product.
It therefore suffices to find a class $\omega\in H^{k,0}(A)$ with
\[
\PD(\omega \cup [V])\cup \PD([W])\neq 0 .
\] 
Since the addition morphism $V\times W\longrightarrow X$ is generically finite, $[V]\star[W]\neq 0$ and so $\PD([V])\cup \PD([W])\neq 0$.
It therefore suffices to prove that there are classes $\omega_i\in H^{k,0}(A)$ and $\omega_i'\in H^{0,k}(\hat A)$ with
\begin{align} \label{eq:=PD(V)}
\sum_i \omega_i'\cup \PD(\omega_i \cup [V])=\PD([V]) .
\end{align}

The existence of suitable classes $\omega_i$ and $\omega_i'$ which satisfy the above identity is established by a straightforward but somewhat lengthy calculation.
To begin with, let $A=\C^g\slash \Gamma$ and let $z_1,\ldots ,z_g$ be coordinates on $\C^g$.
These coordinates give rise to a basis 
$
dz_1,\ldots ,dz_g,d\overline z_1,\ldots ,d\overline z_g
$
 of $H^{1}(A,\C)$.
The corresponding dual basis 
$$
dz_1^\ast,\ldots ,dz_g^\ast,d\overline z_1^\ast,\ldots ,d\overline z_g^\ast
$$
can be identified with a basis of $H^1(\hat A,\C)$, but note that $dz_{i}^\ast \in H^{0,1}(\hat A)$ and $d\overline{z}_j^{\ast} \in H^{1,0}(\hat A)$.

Since $H^k(X,\C)=\Lambda^kH^1(X,\C)$, the above basis of $H^1(X,\C)$ gives rise to a basis 
$$
dz_I\cup d\overline z_{J}=dz_{i_1}\cup\ldots \cup dz_{i_p}\cup d\overline z_{j_1}\cup \ldots \cup d\overline z_{j_q}
$$
of $H^{p,q}(A)$, where $I=(i_1,\ldots ,i_{p})$ and $J=(j_1,\ldots ,j_{q})$ run through all indices with $1\leq i_1 < i_2< \ldots <i_{p}\leq g$ and  $1\leq j_1 < j_2< \ldots <j_{q}\leq g$, respectively.
Similarly, 
$$
dz^{\ast}_I\cup d\overline z^{\ast}_{J}=dz^{\ast}_{i_1}\cup\ldots \cup dz^{\ast}_{i_p}\cup d\overline z^{\ast}_{j_1}\cup \ldots \cup d\overline z^{\ast}_{j_q} ,
$$
yields a basis of $H^{q,p}(\hat A)$, where $I$ and $J$ run through the same indices as above.

Using these basis elements, we have
$$
\PD(dz_I\cup d\overline z_J)=\epsilon_{I,J}\cdot  dz^{\ast}_{I^c}\cup d \overline z^{\ast}_{J^c} ,
$$
where $I\cup I^c=J\cup J^c=\{1,2,\ldots ,g\}$ and $\epsilon_{I,J}$ is determined by
$$
\epsilon_{I,J}=\PD(dz_I\cup d\overline z_J)(dz_{I^c}\cup d\overline z_{J^c})=\int_A dz_I\cup d\overline z_J\cup dz_{I^c}\cup d\overline z_{J^c} .
$$
In particular, only the sign of $\epsilon_{I,J}$ depends on $I$ and $J$.

Let us now return to the proof of (\ref{eq:=PD(V)}).
For suitable $\lambda_{I,J}\in \C$, we have
\begin{align*}
[V]=\sum_{I,J} \lambda_{I,J} \cdot dz_{I}\cup d\overline z_{J} ,
\end{align*}
where the sum runs over all indices $I$ and $J$ of length $g-d$.
Let $L=(l_1,\ldots ,l_k)$ with $1\leq l_1\leq \ldots \leq l_k\leq g$ be an index of length $k$.
Then
\begin{align*}
\PD\left(dz_{L} \cup [V]\right)&=\PD\left( \sum_{I,J \mid L\subseteq I^c} \lambda_{I,J} \cdot dz_{L} \cup dz_{I}\cup d\overline z_{J} \right) \\
&=\PD\left( \sum_{I,J \mid L\subseteq I^c} \lambda_{I,J} \cdot \delta_{L,I}\cdot dz_{L\cup I} \cup d\overline z_{J}\right) \\
&=\sum_{I, J\mid L\subseteq I^c} \lambda_{I,J} \cdot  \epsilon_{L\cup I,J} \cdot \delta_{L,I}\cdot dz_{I^c\setminus L}^{\ast}\cup d\overline z^{\ast}_{J^c} ,
\end{align*}
where the sum runs through all $I$ and $J$ of length $g-d$ such that additionally $L\subseteq I^c$.
Moreover, $L\cup I$ denotes the ordered tuple whose underlying set is the union of $L$ and $I$, and the sign $\delta_{L,I}$ can be computed from
$$
dz_{L} \cup dz_{I}=\delta_{L,I}\cdot dz_{L\cup I} .
$$ 
Using the above definitions, a careful sign check shows
$$
\epsilon_{L\cup I,J}\cdot \delta_{L,I}\cdot dz_{L}^{\ast}\cup dz_{I^c\setminus L}^{\ast}=\epsilon_{I,J} \cdot dz_{I^c}^{\ast} .
$$ 
Using this, we obtain
\begin{align*}
\sum_L dz_{L}^{\ast} \cup \PD(dz_{L} \cup [V])
& =\sum_{L}\sum_{I,J\mid L\subseteq I^c} \lambda_{I,J} \cdot \epsilon_{L\cup I,J} \cdot\delta_{L,I}\cdot dz_{L}^{\ast}\cup dz_{I^c\setminus L}^{\ast}\cup d\overline z_{J^c}^{\ast} \\
& = \sum_{L}\sum_{I,J\mid L\subseteq I^c}   \lambda_{I,J} \cdot \epsilon_{I,J} \cdot dz_{I^c}^{\ast}\cup d\overline z_{J^c}^{\ast} \\
& =  \sum_{I,J}\sum_{L \mid L\subseteq I^c}  \lambda_{I,J} \cdot \epsilon_{I,J} \cdot  dz_{I^c}^{\ast}\cup d\overline z_{J^c}^{\ast} \\
&=  \binom{d}{k} \cdot \sum_{I,J}  \lambda_{I,J} \cdot \epsilon_{I,J} \cdot  dz^{\ast}_{I^c}\cup d\overline z^{\ast}_{J^c} \\
&=  \binom{d}{k} \cdot \PD([V]) .
\end{align*}  
Since $0\leq k\leq d$, $\binom{d}{k}\neq 0$.
This proves (\ref{eq:=PD(V)}), which finishes the proof of the lemma. 
\end{proof}

\subsection{A description in terms of holomorphic forms}

Since $d\leq g-2$, Lemma \ref{lem:Hk0X} implies that
$$
j^\ast: H^{k,0}(A)\stackrel{\sim}\longrightarrow H^{k,0}(\widetilde X)
$$
is an isomorphism for all $1\leq k\leq d$.
Therefore, studying $\tilde \mu_\ast\circ q^\ast$ on the level of holomorphic forms is equivalent to studying the composition 
$$
\psi:=(j^\ast)^{-1}\circ \tilde \mu_\ast\circ q^\ast: H^{k,0}(A)\longrightarrow H^{k,0}(A) ,
$$
where $\tilde \mu_{\ast}$ denotes the trace map on holomorphic forms, see Section \ref{subsec:trace}.
 
For general $x\in X$, we recall from (\ref{def:c(V,W)}) the endomorphism $c_k(V,W)(x)$ of $\Lambda^kT_{X,x}$ and consider its transpose
\begin{align} \label{def:c(V,W)^t}
c_k^t(V,W)(x): \Omega^k_{X,x} \longrightarrow \Omega^k_{X,x} .
\end{align}
The key property of this endomorphism is uncovered by the following lemma.

\begin{lemma} \label{lem:psi}
Let $x\in X$ be a general point and let $\pi_x:H^{k,0}(A)\cong \Omega^k_{A,x}\longrightarrow \Omega^k_{X,x}$ be the natural restriction morphism.
Then the following diagram is commutative
\begin{align} \label{diag:H^{k,0}}
\begin{xy}
  \xymatrix{
H^{k,0}(A) \ar[rr]^{\psi}\ar[d]^{\pi_x}	&	&  \ar[d]^{\pi_x} H^{k,0}(A) 	\\
\Omega^k_{X,x} \ar[rr]^{ c^t_k(V,W)(x)} & & \Omega^k_{X,x} .
  }	
\end{xy}
\end{align} 
\end{lemma}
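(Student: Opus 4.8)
The plan is to prove the commutativity by evaluating both composites on a form $\omega \in H^{k,0}(A)$ at the general point $x$ and tracking the trace map $\tilde\mu_\ast$ through the resolution diagram (\ref{diag:mu}) over the individual points of the fibre. The first step is to unwind what $\pi_x\circ\psi$ does pointwise. Since $x$ is general it is a smooth point of $X$, so $r:\widetilde X\to X$ is a local isomorphism near $\tilde x:=r^{-1}(x)$; writing $\sigma:=\tilde\mu_\ast q^\ast\omega$, the form $\psi(\omega)=(j^\ast)^{-1}\sigma$ is by definition the unique holomorphic form on $A$ whose pullback $r^\ast\bigl(\psi(\omega)|_X\bigr)$ equals $\sigma$. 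Evaluating at $\tilde x$ and transporting back along the local isomorphism $r$ shows that $\pi_x(\psi(\omega))=(\psi(\omega)|_X)_x=\sigma_{\tilde x}$ under the identification $\Omega^k_{\widetilde X,\tilde x}\cong\Omega^k_{X,x}$. Hence it suffices to compute $\sigma_{\tilde x}=(\tilde\mu_\ast q^\ast\omega)_{\tilde x}$ and to compare it with $c_k^t(V,W)(x)(\pi_x\omega)$.

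Next I would apply the pointwise description of the trace map from Section \ref{subsec:trace}. For $x$ general the fibre $f^{-1}(x)=\{(v_1,w_1),\dots,(v_s,w_s)\}$ consists of smooth points of $V\times W$ at which $f$ is unramified, and all four resolution maps ($\widetilde V\to V$, $\widetilde W\to W$, $\tilde r$ and $r$) are local isomorphisms near the relevant points. Writing $\tilde p_i\in\widetilde{V\times W}$ for the point over $(v_i,w_i)$, the trace formula gives
\[
(\tilde\mu_\ast q^\ast \omega)_{\tilde x} = \sum_{i=1}^s \bigl( q^\ast \omega \bigr)_{\tilde p_i},
\]
where each summand is transported into $\Omega^k_{X,x}$ via the isomorphism induced by $d\tilde\mu_{\tilde p_i}$. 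Under the natural identification $T_{\widetilde{V\times W},\tilde p_i}\cong T_{V,v_i}\oplus T_{W,w_i}=T_{X,x}$, the differential $d\tilde\mu_{\tilde p_i}$ is vector addition, hence the identity, so each summand is simply $(q^\ast\omega)_{\tilde p_i}$ read off in $\Omega^k_{X,x}$.

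The heart of the computation is then the linear-algebraic identification of a single summand, and here I would exploit that holomorphic forms on the abelian variety $A$ are translation invariant: under the canonical identifications $T_{A,p}\cong T_{A,0}$ the value $\omega_{v_i}$ is the fixed alternating form $\omega\in\Lambda^kT^\ast_{A,0}$. Because $q=i\circ\pr_1\circ\tilde r$ first projects to the $V$-factor and then includes $V$ in $A$, its differential at $\tilde p_i$ is $\iota_X\circ P_i$, where $P_i:T_{X,x}\to T_{X,x}$ is the projector with image $T_{V,v_i}$ and kernel $T_{W,w_i}$ and $\iota_X:T_{X,x}\hookrightarrow T_{A,x}$ is the inclusion. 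Taking $k$-th exterior powers of the duals therefore gives $(q^\ast\omega)_{\tilde p_i}=(\Lambda^kP_i)^t(\iota_X^\ast\omega)=(\Lambda^kP_i)^t(\pi_x\omega)$, since $\iota_X^\ast$ is exactly the restriction map $\pi_x$. Summing over the $s$ points of the fibre yields $(\tilde\mu_\ast q^\ast\omega)_{\tilde x}=\bigl(\sum_i(\Lambda^kP_i)^t\bigr)(\pi_x\omega)=c_k^t(V,W)(x)(\pi_x\omega)$, which is precisely the asserted commutativity.

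The main obstacle I anticipate is not a hard estimate but the bookkeeping needed to justify the pointwise reduction: one must check that a sufficiently general $x$ simultaneously avoids the singular locus of $X$, the branch locus of $f$, the singular loci of $V$ and $W$, and the exceptional loci of all four resolution maps, so that the trace formula applies and every resolution is a local isomorphism near $\tilde x$ and the $\tilde p_i$. The other delicate point is keeping the dualization conventions consistent, so that the contravariant pullback $(dq_{\tilde p_i})^\ast=(\Lambda^kP_i)^t\circ\iota_X^\ast$ matches the transpose $c_k^t(V,W)(x)$ on the nose rather than up to an unwanted dual.
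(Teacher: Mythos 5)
Your proof is correct and takes essentially the same route as the paper's: reduce to a pointwise computation at the unique preimage $\tilde x$ of the general point $x$, where all resolution maps are local isomorphisms, identify $\tilde\mu^{-1}(\tilde x)$ with $f^{-1}(x)$, and use translation invariance of $\omega$ to read off the trace as the sum of the transposed projectors $(\Lambda^k P_i)^t$ applied to $\pi_x\omega$. The only difference is expository: you make explicit the linear algebra (that $dq_{\tilde p_i}$ is $\iota_X\circ P_i$ under the identifications, so each trace summand is $(\Lambda^k P_i)^t(\pi_x\omega)$) which the paper compresses into ``comparing the definitions of the trace map and the endomorphism $c_k^t(V,W)(\tilde x)$.''
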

\begin{proof}
Since $x\in X$ is general, there is a unique (and general) point $\tilde x\in \widetilde X$ with $r(\tilde x)=x$ and we may assume that $r:\widetilde X\longrightarrow X$ is an isomorphism in a neighbourhood of $\tilde x$.
This induces an isomorphism
$$
r^{\ast} :\Omega_{X,x}^k\stackrel{\sim}\longrightarrow  \Omega_{\widetilde X,\tilde x}^k .
$$
Via this isomorphism, $c^t_k(V,W)(x)$ corresponds to an endomorphism
\[
c_k^t(V,W)(\tilde x):=r^{\ast}\circ c_k^t(V,W)(x) \circ (r^{\ast})^{-1} : \Omega^k_{\widetilde X,\tilde x}\longrightarrow \Omega^k_{\widetilde X,\tilde x} .
\] 

Since $\tilde x\in \widetilde X$ is general, $\tilde \mu^{-1}(\tilde x)$ lies in the locus where $\widetilde {V\times W}\longrightarrow V\times W$ is an isomorphism.
In particular, $\widetilde \mu^{-1}(\tilde x)$ can be identified with $f^{-1}(x)$.  
Therefore, comparing the definitions of the trace map $\tilde \mu_{\ast}$ and the endomorphism $c_k^t(V,W)(\tilde x)$, we obtain
\begin{align*} 
(\tilde \mu_\ast ( q^\ast \omega))_{\tilde x}=c_k^t(V,W)(\tilde x)((j^\ast \omega)_{\tilde x}) ,
\end{align*}
for all $\omega \in H^{k,0}(A)$, where we use that $\omega$ is translation invariant. 
The above identity holds in $\Omega^k_{\widetilde X,\tilde x}$, and via the isomorphism $(r^{\ast})^{-1}:  \Omega_{\widetilde X,\tilde x}^k \stackrel{\sim}\longrightarrow  \Omega_{X,x}^k$, we obtain 
$$
(\tilde \mu_\ast ( q^\ast \omega))_{\tilde x} \mapsto \pi_x(\psi(\omega)) ,
$$
$$
c_k^t(V,W)(\tilde x)((j^\ast \omega)_{\tilde x}) \mapsto c_k^t(V,W)( x)(\pi_x(\omega)).
$$
This proves the lemma. 
\end{proof}  

\begin{proposition}\label{prop:psi}
Let $1\leq k\leq d$.
Then,
$$
\psi=(j^\ast)^{-1}\circ \tilde \mu_\ast\circ q^\ast: H^{k,0}(A)\longrightarrow H^{k,0}(A)
$$ 
is a nonzero multiple of the identity.
\end{proposition}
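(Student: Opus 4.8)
The plan is to extract enough structure of $\psi$ from Lemma \ref{lem:psi} to pin it down by linear algebra, using ampleness of $X$ only through the nondegeneracy of $X$. The starting point is that Lemma \ref{lem:psi} forces $\psi$ to preserve $\ker(\pi_x)$ for every general $x\in X$: if $\pi_x(\omega)=0$, then $\pi_x(\psi(\omega))=c^t_k(V,W)(x)(\pi_x(\omega))=0$, so $\psi(\omega)\in\ker(\pi_x)$. At a smooth point $x$ the conormal sequence identifies $\Omega^1_{X,x}$ with $H^{1,0}(A)/\langle\nu_x\rangle$, where $\nu_x\in H^{1,0}(A)$ is the conormal covector of $X$ at $x$; writing $U:=H^{1,0}(A)$, this gives
\[
\ker(\pi_x)=\nu_x\wedge\Lambda^{k-1}U\subseteq\Lambda^kU=H^{k,0}(A).
\]
Thus $\psi$ preserves the subspace $\nu_x\wedge\Lambda^{k-1}U$ for all general $x$.

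Next I would bring in the geometry of the conormals. As $x$ ranges over $X^{\sm}$, the line $[\nu_x]$ traces the image of the Gauss map $\mathcal G:X\dashrightarrow\Gr(g-1,g)\cong\mathbb P(U)$, an irreducible subvariety $\Sigma\subseteq\mathbb P(U)$ since it is dominated by $X$. Because $X$ is ample it is nondegenerate (Example \ref{ex:ample}), and by the Gauss-map description of nondegeneracy the image $\Sigma$ is contained in no hyperplane, i.e.\ the $\nu_x$ span $U$. The condition ``$\psi$ preserves $v\wedge\Lambda^{k-1}U$'' is Zariski closed in $[v]\in\mathbb P(U)$ (it is the vanishing of a bundle map over $\mathbb P(U)$) and holds on a dense subset of $\Sigma$, hence on all of $\Sigma$. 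So $\psi$ preserves $v\wedge\Lambda^{k-1}U$ for every $v$ in the irreducible, spanning family $\Sigma$.

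Now the linear-algebra step. For linearly independent $v_1,\dots,v_k\in\Sigma$ (which exist since $\Sigma$ spans the $g$-dimensional $U$ and $k\le d\le g-2$), one checks, by projecting modulo each $v_i$, that $\bigcap_{i=1}^k v_i\wedge\Lambda^{k-1}U=\mathbb C\,v_1\wedge\cdots\wedge v_k$; as $\psi$ preserves each factor it preserves the intersection, so every such decomposable vector is an eigenvector of $\psi$. Since $\Sigma$ is irreducible and spans $U$, the decomposables $v_1\wedge\cdots\wedge v_k$ with $v_i\in\Sigma$ form an irreducible family in $\mathbb P(\Lambda^kU)$ that spans $\Lambda^kU$ (a basis of $U$ chosen from $\Sigma$ yields a basis of $\Lambda^kU$ of this shape). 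An irreducible family of eigenlines of a fixed operator must lie in a single eigenspace, because the finitely many $\mathbb P(E_\mu)$ cover the family and one of these closed sets must then be all of it; as the family spans, that eigenspace is all of $\Lambda^kU$, whence $\psi=\lambda_k\cdot\id$ for a single scalar $\lambda_k$.

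It remains to see $\lambda_k\neq0$, and this is exactly the role of Lemma \ref{lem:nonzero}. Combining Lemma \ref{lem:ast} with the projection formula $j_\ast j^\ast=\cup\,[X]$ (valid since $j_\ast 1=[X]$) gives, for $\omega\in H^{k,0}(A)$, the identity $\psi(\omega)\cup[X]=(\omega\cup[V])\star[W]$; by Lemma \ref{lem:nonzero} the right-hand side is nonzero for some $\omega$, so $\lambda_k\,(\omega\cup[X])\neq0$ and hence $\lambda_k\neq0$. The step I expect to require the most care is the passage from Lemma \ref{lem:psi} to this geometric picture: correctly identifying $\ker(\pi_x)$ with $\nu_x\wedge\Lambda^{k-1}U$, and invoking nondegeneracy of the ample divisor $X$ to guarantee that the conormals $\nu_x$ span $U$. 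Once the spanning and irreducibility of the Gauss image are in hand, the conclusion $\psi=\lambda_k\cdot\id$ is formal.
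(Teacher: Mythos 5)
Your proof is correct, and its skeleton coincides with the paper's: both start from Lemma \ref{lem:psi} to get $\psi(\ker \pi_x)\subseteq \ker \pi_x$ for general $x$, both identify $\ker(\pi_x)$ with $\nu_x\wedge \Lambda^{k-1}H^{1,0}(A)$ for the conormal covector $\nu_x$, and both obtain $\lambda_k\neq 0$ from Lemmas \ref{lem:ast} and \ref{lem:nonzero} (your projection-formula identity $j_\ast j^\ast = \cup\,[X]$ is just a more explicit rendering of the paper's deduction). Where you genuinely diverge is the endgame. The paper invokes the fact that the Gauss map of the \emph{ample} divisor $X$ is dominant, so the closed condition $\psi(\alpha\wedge \Lambda^{k-1}H^{1,0}(A))\subseteq \alpha\wedge \Lambda^{k-1}H^{1,0}(A)$, holding on a dense set of $[\alpha]$, holds for \emph{all} $\alpha\in H^{1,0}(A)$; it then finishes with an explicit coordinate computation: $\psi(dz_I)=\lambda(I)\,dz_I$, and $\lambda(I)=\lambda(n,i_2,\ldots,i_k)$ by applying invariance to $dz_n+dz_{i_1}$. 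You use only that the Gauss image $\Sigma$ is irreducible and spans $H^{1,0}(A)$ -- that is, nondegeneracy of $X$, a weaker consequence of ampleness than dominance of the Gauss map -- and replace the coordinate computation by an eigenspace argument: decomposables with factors in $\Sigma$ are eigenlines (via $\bigcap_i v_i\wedge\Lambda^{k-1}H^{1,0}(A)=\C\, v_1\wedge\cdots\wedge v_k$), and an irreducible spanning family of eigenlines must lie in a single eigenspace. Both routes are sound; yours requires less geometric input (spanning rather than density of the conormal directions) and would apply verbatim to any irreducible spanning family of conormals, while the paper's finish is more elementary once dominance of the Gauss map is granted.
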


\begin{proof}
Lemma \ref{lem:psi} implies
\begin{align} \label{eq:kerpix}
\psi(\ker(\pi_x))\subseteq \ker(\pi_x) .
\end{align} 
The point is that this inclusion holds for general $x\in X$, whereas $\psi$ does not depend on $x$.
Since $X\subseteq A$ is an ample divisor, this condition forces $\psi$ to be a multiple of the identity as follows.

The kernel of $H^{1,0}(A)\longrightarrow \Omega^1_{X,x}$ is generated by a $1$-form
\[
\alpha(x):=\sum_{i=1}^g \lambda_i(x)\cdot dz_i ,
\]
where $z_1,\ldots ,z_g$ denote coordinates on $\C^g$ with $A=\C^g\slash \Gamma$.
Hence,
\[
\ker(\pi_x)= \alpha(x)\wedge H^{k-1,0}(A) .
\]
Since $X$ is an ample divisor, the Gauss map
\[
G_X:X\dashedlongrightarrow \mathbb P^{g-1} ,\ \ x\mapsto [\alpha(x)]
\]
is dominant, where we identify $\Omega_{A,x}^{1}$ via translation with $\Omega_{A,0}^{1}$. 
Since (\ref{eq:kerpix}) holds for general $x\in X$, we conclude that for general (hence for all) $\alpha\in H^{1,0}(A)$, 
\begin{align} \label{eq:psi(alpha.beta)}
\psi(\alpha\wedge H^{k-1,0}(A))\subseteq \alpha \wedge H^{k-1,0}(A) .
\end{align}

Let $I=(i_1,\ldots ,i_k)$ be a $k$-tuple of integers $1\leq i_j\leq g$ with $i_j\neq i_l$ for $j\neq l$ and consider the corresponding $k$-form $dz_I:=dz_{i_1}\wedge\ldots \wedge dz_{i_k}$.
Applying (\ref{eq:psi(alpha.beta)}) to $\alpha=dz_{i_j}$, we conclude
\begin{align} \label{eq:lambda(I)}
\psi(dz_I)=\lambda(I)\cdot dz_I
\end{align}
for some constant $\lambda(I)\in \C$.
Let $n$ be an integer with $1\leq n\leq g$ and $n\neq i_j $ for all $j$.
Then, 
\[
\psi((dz_n+dz_{i_1})\wedge dz_{i_2}\wedge \ldots \wedge dz_{i_k}) 
\]
is by (\ref{eq:psi(alpha.beta)}) a multiple of $dz_n+dz_{i_1}$ and so it follows from (\ref{eq:lambda(I)}) that 
$$
\lambda(I)=\lambda(n,i_2,\ldots ,i_k).
$$
Since, by definition, $\lambda(I)$ is invariant under permutation of the indices $i_j$, it follows that $\lambda(I)=\lambda$ does not depend on $I$.
That is, 
\begin{align*} 
\psi=\lambda\cdot \id .
\end{align*}
By Lemmas \ref{lem:ast} and \ref{lem:nonzero}, $\lambda \neq 0$, which finishes the proof of Proposition \ref{prop:psi}.
\end{proof}

\subsection{Proof of Theorem \ref{thm:prop:P}} 
Fix $1\leq k \leq d$, and let $T^k(A)\subseteq H^k(A,\Q)$ be the transcendental lattice, i.e.\ the smallest rational sub-Hodge structure with $T^{k,0}(A)=H^{k,0}(A)$; the transcendental lattice $T^{k}(\widetilde X)\subseteq H^k(\widetilde X,\Q)$ is defined similarly.
By Lemma \ref{lem:Hk0X}, $j^{\ast}:H^k(A,\Q) \longrightarrow H^k(\widetilde X,\Q)$ induces an isomorphism  on the transcendental lattices
$$
j^{\ast}:T^k(A)\stackrel{\sim}\longrightarrow T^{k}(\widetilde X),
$$
and so we can define its inverse $(j^{\ast})^{-1}$.
In particular, the homomorphism 
$$
\psi=(j^\ast)^{-1}\circ \tilde \mu_\ast\circ q^\ast :H^{k,0}(A)\longrightarrow H^{k,0}(A)
$$ 
from Proposition \ref{prop:psi} extends to an endomorphism of the rational Hodge structure $T^k(A)$, and so $\psi=\lambda_k\cdot \id$ for some $\lambda_k\in \Q^{\times}$.
Hence, by Lemma \ref{lem:psi}, 
$$
c^t_k(V,W)(x)=\lambda_k \cdot \id \ \ \text{and}\ \ c_k(V,W)(x)=\lambda_k \cdot \id ,
$$  
where we used that $c_k(V,W)(x)$ is the transpose of $c^t_k(V,W)(x)$. 
By construction, $\lambda_k\in \Q^{\times}$ does not depend on $x$, which finishes the proof of Theorem \ref{thm:prop:P}.

\section{Proof of Theorem \ref{thm:P}} 
In this section we prove Theorem \ref{thm:P}; our notation will always be that of Section \ref{sec:P}. 

Since $q^{\ast}=\widetilde r^{\ast}\circ \pr_1^{\ast} \circ i^{\ast}$, Proposition \ref{prop:psi} implies that $i^{\ast}:H^{d,0}(A)\longrightarrow H^{d,0}(\widetilde V)$ is injective and so $V$ is nondegenerate by \cite[Lemma II.1]{ran}.
This proves item (\ref{item:P:1}) of Theorem \ref{thm:P} by symmetry in $V$ and $W$.
Moreover, item (\ref{item:P:2}) of Theorem \ref{thm:P} follows directly from Theorem \ref{thm:prop:P}, because $c_d(V,W)$ is a sum of $s=\deg(f)$ many rank one projectors.

It remains to prove items (\ref{item:P:3:0})--(\ref{item:P:4}) of Theorem \ref{thm:P}.
For this we assume from now on that $\deg(f)=\binom{g-1}{d}$ is minimal.
For technical reasons, we start with (\ref{item:P:3}) and (\ref{item:P:4}).

\subsection{Proof of item (\ref{item:P:3})}
Our goal is to prove that
\[
i^\ast: H^{d,0}(A)\stackrel{\sim}\longrightarrow H^{d,0}(\widetilde V) 
\]
is an isomorphism, which clearly implies $p_g(V)=\binom{g}{d}$.

By Proposition \ref{prop:psi}, $(j^{\ast})^{-1}\circ \tilde \mu_\ast \circ q^\ast:  H^{d,0}(A)\longrightarrow H^{d,0}(A)$ is an isomorphism.
Since $q^{\ast}=\widetilde r^{\ast}\circ \pr_1^{\ast} \circ i^{\ast}$, this implies (as we have already noted above) that $i^{\ast}$ is injective and
\begin{align} \label{eq:cor:P}
(j^{\ast})^{-1}\circ\tilde \mu_\ast\circ \tilde r^\ast \circ \pr_1^\ast :H^{d,0}(\widetilde V)\longrightarrow H^{d,0}(A) 
\end{align} 
is an isomorphism on the subspace $i^\ast H^{d,0}(A)$. 
In order to prove that $i^\ast$ is an isomorphism on $(d,0)$-classes, it therefore suffices to see that the morphism in (\ref{eq:cor:P}) is injective.
To this end, let us consider an element $\omega\in H^{d,0}(\widetilde V)$ in the kernel of (\ref{eq:cor:P}) and think about $\omega$ as holomorphic $d$-form on $\widetilde V$.
Applying $r_\ast\circ j^{\ast}$ shows
$$
r_{\ast}  \circ  \tilde \mu_\ast\circ \tilde r^\ast \circ \pr_1^\ast(\omega) =0 .
$$
By (\ref{eq:mu}), $r_{\ast}  \circ  \tilde \mu_\ast\circ \tilde r^\ast = \mu_\ast$, which implies
$$
\mu_\ast\circ \pr_1^\ast(\omega)=0.
$$ 

Let $x\in X$ be a general point with $f^{-1}(x)=\left\{(v_1,w_1),\ldots ,(v_s,w_s)\right\}$.
By Theorem \ref{thm:prop:P}, $c_d(V,W)(x)=\sum_{i=1}^s \Lambda^d P_i$ is an isomorphism.
The image of $\Lambda^dP_i$ is the line $\Lambda^d T_{V,v_i}$ in $\Lambda^d T_{X,x}$.
By assumptions, $s=\dim(\Lambda^d T_{X,x})$, and so it follows that the lines $\Lambda^d T_{V,v_i}\subseteq \Lambda^dT_{X,x}$ are linearly independent for $i=1,\ldots ,s$.
Therefore, $\mu_\ast\circ \pr_1^\ast(\omega)=0$ implies $\omega_{v_i}=0$ for each $i$ by the definition of the trace map $\mu_{\ast}$, see Section \ref{subsec:trace}.
Since $x\in X$ is general, $\omega_{v}=0$ for general $v\in V$ and so $\omega=0$.
This proves that
\[
i^\ast: H^{d,0}(A)\stackrel{\sim}\longrightarrow H^{d,0}(\widetilde V) 
\]
is an isomorphism, as we want.

\subsection{Proof of item (\ref{item:P:4})}
We need to see that $V$ has property $(\mathcal P)$ with respect to $W$.
This follows from Theorem \ref{thm:prop:P} and the nondegeneracy of $V$ and $W$ (item (\ref{item:P:1}) of Theorem \ref{thm:P}, proven above) by a similar argument as in the proof of Theorem 3.1 in \cite{debarre}.
For convenience of the reader, we give the details in the following.

By Theorem \ref{thm:prop:P},
\[
c_d(V,W)(x)=\sum_{i=1}^s \Lambda^dP_i=\lambda_d\cdot \id :\Lambda^d T_{X,x}\longrightarrow \Lambda^d T_{X,x}.
\]
Here, $\lambda_d\in \Q^{\times}$ and $s=\deg(f)=\binom{g-1}{d}$ by assumptions.
Moreover, $\Lambda^dP_i: \Lambda^d T_{X,x}\longrightarrow \Lambda ^d T_{X,x}$ is the homomorphism whose image is the line $\Lambda^dT_{V,v_i}$ and whose kernel is 
\[
\ker(\Lambda ^dP_i)=T_{W,w_i}\wedge \Lambda^{d-1}T_{X,x} .
\]
Therefore, for suitable generators $\alpha_i\in \Lambda^dT_{V,v_i}$ and $\beta_i\in \Lambda^{g-1-d} T_{W,w_i}$, we have
\[
\Lambda^dP_i(\alpha)=(  \alpha \wedge \beta_i) \cdot \alpha_i \ \ \text{for all}\ \ \alpha\in \Lambda^dT_{X,x} ,
\]
where we use a fixed isomorphism $\Lambda^{g-1}T_{X,x}\cong \C$ to identify $ \alpha \wedge \beta_i$ with a scalar.
Since $\sum_i\Lambda^d P_i=\lambda_d\cdot \id$, the above formula yields 
\[
\lambda_d\cdot \alpha_j=\sum_{i=1}^s  (\alpha_j\wedge \beta_i) \cdot \alpha_i ,
\]
for all $j=1,\ldots ,s$.
This shows 
$
\alpha_j\wedge \beta_i =0
$
for all $i\neq j$, because  $\alpha_1,\dots ,\alpha_s$ is a basis of $\Lambda^d T_{X,x}$.
That is,
\begin{align} \label{eq:TWwedgeTV}
\Lambda^{g-1-d} T_{W,w_j}\wedge \Lambda^d T_{V,v_i}=0
\end{align}
for all $i\neq j$.

In the above equality, we can put $i=1$ and $j=2,\ldots ,s$.
Fixing $v_1$ and moving $w_1$ in a sufficiently small analytic neighborhood, the points $(v_i,w_i)$ with $v_i+w_i=v_1+w_1$ move in a unique way.
Since $W$ is nondegenerate by item (\ref{item:P:1}) above, its Pl\"ucker image is via the Gauss map not contained in any hyperplane. 
Identity (\ref{eq:TWwedgeTV}) shows therefore that the locus that is described by $w_i$ with $i\geq 2$ when $w_1$ moves and $v_1$ is fixed is of dimension less than $\dim(W)$.
This proves that $V$ has property $(\mathcal P)$ with respect to $W$, which finishes the proof of Theorem \ref{thm:P}.

\subsection{Proof of item (\ref{item:P:3:0})}
By Lemma \ref{lem:Hk0X}, in order to prove $h^0(A,\mathcal O_A(X))=1$, we need to see that 
\begin{align} \label{eq:item:P:3:0}
j^{\ast}:H^{g-1,0}(A)\longrightarrow H^{g-1,0}(\widetilde X)
\end{align}
is surjective. 
Recall from (\ref{diag:mu}) that the addition morphism $f$ induces a surjective morphism $\tilde \mu: \widetilde{ V \times W}\longrightarrow \widetilde X$ between smooth models of $V\times W$ and $X$.
Since
$$
\tilde \mu_{\ast}\circ \tilde \mu^{\ast} : H^{g-1,0}(\widetilde X)\longrightarrow H^{g-1,0}(\widetilde X)
$$
is an isomorphism (it is multiplication by $\deg(\tilde \mu)$), the composition
\begin{align} \label{eq:muomu}
H^{d,0}(\widetilde V)\otimes H^{g-1-d,0}(\widetilde W) \cong H^{g-1,0}(\widetilde{V\times W})\stackrel{\tilde \mu_{\ast} }\longrightarrow H^{g-1,0}(\widetilde X)
\end{align}
is surjective.
We have seen in the proof of item (\ref{item:P:3}) that the natural pullback maps 
$$
H^{d,0}(A)\longrightarrow H^{d,0}(\widetilde V) \ \ \text{and} \ \ H^{g-d-1,0}(A)\longrightarrow H^{g-d-1,0}(\widetilde W) 
$$ 
are isomorphisms.
The following lemma proves therefore that (\ref{eq:item:P:3:0}) and (\ref{eq:muomu}) have the same images.
Since (\ref{eq:muomu}) is surjective, so is (\ref{eq:item:P:3:0}), which concludes the proof of item (\ref{item:P:3:0}).

\begin{lemma}
Let $\alpha\in H^{d,0}(A)$ and $\beta\in H^{g-d-1,0}(A)$ and consider the corresponding holomorphic $(g-1)$-form $\alpha |_V\otimes \beta |_{W}$ on the smooth part of $V\times W$.
Then, the image
$$
f_{\ast}(\alpha |_V\otimes \beta |_{W})\in H^0(X^{\sm},\Omega^{g-1}_{X^{\sm}})
$$
via the trace map $f_{\ast}$ is a nonzero multiple of the restriction of $\alpha\wedge \beta\in H^{g-1,0}(A)$ to $X^{\sm}$.  
\end{lemma}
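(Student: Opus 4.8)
The plan is to compute the trace map $f_\ast$ fiberwise at a general point $x\in X$, using the pointwise description of $f_\ast$ recalled in Section~\ref{subsec:trace}, and to compare the result with the restriction of $\alpha\wedge\beta$. First I would fix a general $x\in X$ with $f^{-1}(x)=\{(v_1,w_1),\dots,(v_s,w_s)\}$, where the tangent spaces split as $T_{V,v_i}\oplus T_{W,w_i}=T_{X,x}$. The form $\alpha|_V\otimes\beta|_W$ is, fiberwise, the element of $\Omega^{g-1}_{V\times W,(v_i,w_i)}$ obtained by restricting $\alpha$ to $T_{V,v_i}$ and $\beta$ to $T_{W,w_i}$; under the isomorphism $\Omega^{g-1}_{V\times W,(v_i,w_i)}\cong\Omega^{g-1}_{X,x}$ induced by $f$, the trace $f_\ast(\alpha|_V\otimes\beta|_W)$ at $x$ is the sum $\sum_{i=1}^s(\alpha|_{T_{V,v_i}})\wedge(\beta|_{T_{W,w_i}})$.

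The heart of the matter is then a linear-algebra identity at $T_{X,x}$. With the projector $P_i$ onto $T_{V,v_i}$ along $T_{W,w_i}$, each summand $(\alpha|_{T_{V,v_i}})\wedge(\beta|_{T_{W,w_i}})$ can be rewritten intrinsically: restricting $\alpha$ to $T_{V,v_i}$ is the same as precomposing with $P_i$, i.e.\ it equals $(\Lambda^d P_i)^t(\alpha)$, and similarly restricting $\beta$ to $T_{W,w_i}$ corresponds to the complementary projector $\id-P_i$. So I would identify $\sum_i(\alpha|_{T_{V,v_i}})\wedge(\beta|_{T_{W,w_i}})$ with the contraction of $\alpha\wedge\beta$ against $\sum_i\Lambda^d P_i=c_d(V,W)(x)$. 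Here Theorem~\ref{thm:prop:P} is the key input: $c_d(V,W)(x)=\lambda_d\cdot\id$ with $\lambda_d\in\Q^\times$. This forces $f_\ast(\alpha|_V\otimes\beta|_W)_x$ to be exactly $\lambda_d$ times the value of $\alpha\wedge\beta$ at $x$, which is the desired nonzero multiple, independent of $x$.

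The main obstacle I anticipate is bookkeeping the wedge-product and transpose conventions carefully enough to turn the fiberwise sum $\sum_i(\alpha|_{T_{V,v_i}})\wedge(\beta|_{T_{W,w_i}})$ into a clean contraction against $c_d(V,W)(x)$. Concretely, one must check that pairing the top-degree form $\alpha\wedge\beta\in\Lambda^{g-1}T_{X,x}^\vee$ with the decomposition induced by $P_i$ reproduces precisely $(\alpha|_{T_{V,v_i}})\wedge(\beta|_{T_{W,w_i}})$, using that $\alpha$ has degree $d$, $\beta$ has degree $g-1-d$, and $\Lambda^d T_{V,v_i}$ is a line complementary to $T_{W,w_i}\wedge\Lambda^{d-1}T_{X,x}=\ker(\Lambda^d P_i)$. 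Once this identification is in place, the conclusion is immediate from $c_d(V,W)(x)=\lambda_d\cdot\id$, and the fact that $\lambda_d\neq 0$ guarantees the multiple is nonzero. Since the resulting form agrees with $\lambda_d\cdot(\alpha\wedge\beta)$ on the dense open set of general points of $X^{\sm}$, it agrees everywhere on $X^{\sm}$, completing the proof.
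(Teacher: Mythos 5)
Your overall plan---compute $f_\ast$ fiberwise and feed the result into Theorem \ref{thm:prop:P}---is viable and genuinely different from the paper's argument, but the pivotal identification is not correct as stated and must be replaced. The fiberwise formula $f_\ast(\alpha|_V\otimes\beta|_W)_x=\sum_i\alpha|_{V,v_i}\wedge\beta|_{W,w_i}$, with $\alpha|_{V,v_i}=(\Lambda^dP_i)^t\alpha$ and $\beta|_{W,w_i}=(\Lambda^{g-1-d}(\id-P_i))^t\beta$, agrees with the paper. However, there is no ``contraction of $\alpha\wedge\beta$ against $c_d(V,W)(x)$'' that reproduces this sum term by term: an individual summand is \emph{not} a function of the top form $\alpha\wedge\beta$ and the projector alone, because it depends on the chosen factorization. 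For instance, in $T=\C^2$ with $P$ the projection onto $\C f_1$ along $\C(f_1+f_2)$, the factorizations $(\alpha,\beta)=(f_1^\vee,f_2^\vee)$ and $(f_1^\vee,\,f_1^\vee+f_2^\vee)$ of the same $2$-form $f_1^\vee\wedge f_2^\vee$ give $(P^t\alpha)\wedge\beta$ equal to $f_1^\vee\wedge f_2^\vee$ and to $2\,f_1^\vee\wedge f_2^\vee$, respectively; so any operation bilinear in $(c_d,\alpha\wedge\beta)$ is ruled out. Moreover, each summand involves $\id-P_i$ as well as $P_i$, so the linearity in $\Lambda^dP_i$ that you need in order to invoke $\sum_i\Lambda^dP_i=\lambda_d\cdot\id$ is not automatic. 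The correct replacement, which stays entirely within your plan, is the identity
$$
\bigl((\Lambda^dP_i)^t\alpha\bigr)\wedge\bigl((\Lambda^{g-1-d}(\id-P_i))^t\beta\bigr)=\bigl((\Lambda^dP_i)^t\alpha\bigr)\wedge\beta|_{X,x}:
$$
in a basis $e_1,\dots,e_{g-1}$ of $T_{X,x}$ adapted to $T_{V,v_i}\oplus T_{W,w_i}$ one has $(\Lambda^dP_i)^t\alpha=\alpha(e_1,\dots,e_d)\,e_1^\vee\wedge\dots\wedge e_d^\vee$, and wedging this with any $(g-1-d)$-form only sees the coefficient of $e_{d+1}^\vee\wedge\dots\wedge e_{g-1}^\vee$, which is the same for $\beta|_{X,x}$ and for its projection since $\id-P_i$ is the identity on $T_{W,w_i}$. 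The right-hand side \emph{is} linear in $\Lambda^dP_i$, so summing over $i$ and applying Theorem \ref{thm:prop:P} once yields $f_\ast(\alpha|_V\otimes\beta|_W)_x=\lambda_d\,(\alpha\wedge\beta)|_{X,x}$ with $\lambda_d\in\Q^\times$ independent of $x$, as required.

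With this repair, your route is in fact slightly stronger than the paper's. The paper does not linearize; it first kills the cross terms, showing $\alpha|_{V,v_i}\wedge\beta|_{W,w_j}=0$ for $i\neq j$ by producing a nonzero vector in $T_{W,w_i}\cap T_{V,v_j}$ via (\ref{eq:TWwedgeTV}), then factors the sum as $\bigl(\sum_i\alpha|_{V,v_i}\bigr)\wedge\bigl(\sum_j\beta|_{W,w_j}\bigr)$ and applies Theorem \ref{thm:prop:P} twice, to $c_d(V,W)$ and to $c_{g-1-d}(W,V)$. Since (\ref{eq:TWwedgeTV}) is established in the proof of property $(\mathcal P)$ under the standing hypothesis $\deg(f)=\binom{g-1}{d}$, the paper's proof of the lemma genuinely uses minimality of the degree, whereas your argument needs only the conclusion of Theorem \ref{thm:prop:P}: it proves the lemma for arbitrary $V,W$ whose sum is an ample divisor with rational singularities, with a single application of that theorem.
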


\begin{proof}
Let $x\in X^{\sm}$ be a general point and write 
$
f^{-1}(x)=\left\{(v_1,w_1),\dots ,(v_s,w_s)\right\} 
$. 
Then,
$$
f_{\ast}(\alpha |_V\otimes \beta |_{W})_x=\sum_{i=1}^s\alpha |_{V,v_i}\wedge \beta |_{W,w_i} ,
$$
where by slight abuse of notation,  
$\alpha |_{V,v_i}\in \Omega_{X,x}^d$
denotes the composition
$$
\Lambda ^d T_{X,x}\longrightarrow \Lambda ^d T_{V,v_i}\longrightarrow \C ,
$$
where the first arrow is the projection induced by the direct sum decomposition $T_{X,x}=T_{V,v_i}\oplus T_{W,w_i}$, and the second arrow is given by the restriction of the $d$-form $\alpha$ to $V$; $\beta |_{W,w_i}\in \Omega_{X,x}^{g-d-1}$ is defined similarly. 

For $i\neq j$, the intersection $T_{W,w_i}\cap T_{V,v_j}$ is nonzero by (\ref{eq:TWwedgeTV}), and so we can pick a nonzero vector $\nu_{ij} \in T_{X,x}$ which lies in that intersection.
Completing $\nu_{ij}$ to a basis of $T_{X,x}$ shows then that $\alpha |_{V,v_i}\wedge \beta |_{W,w_j}=0$ for all $i\neq j$, because the contractions of $\alpha |_{V,v_i} $ and $\beta |_{W,w_j} $ with $\nu_{ij}$ both vanish.
This implies
$$
f_{\ast}(\alpha |_V\otimes \beta |_{W})_x=\left( \sum_{i=1}^s\alpha |_{V,v_i} \right)\wedge \left(\sum_{j=1}^s \beta |_{W,w_j}\right) . 
$$ 
By Theorem \ref{thm:prop:P} there are nonzero constants $\lambda,\lambda'\in \Q^{\times}$, not depending on $x$, such that
$$
\sum_{i=1}^s\alpha |_{V,v_i}=\lambda\cdot \alpha |_{X,x}\ \ \text{and}\ \ \sum_{j=1}^s \beta |_{W,w_j}=\lambda'\cdot \beta |_{X,x}.
$$ 
Hence, $f_{\ast}(\alpha |_V\otimes \beta |_{W})$ coincides with the restriction of $\lambda\lambda'\cdot (\alpha \wedge \beta)$ to $X^{\sm}$, as we want.
\end{proof}

\section{Applications}

In this section we draw some consequences from Theorem \ref{thm:P}.
In particular, we give a proof of Theorem \ref{thm:GV}, stated in the introduction.

\begin{corollary} \label{cor:singular}
Let $A$ be a $g$-dimensional abelian variety, and let $V,W\subseteq A$ be closed geometrically nondegenerate subvarieties of dimensions $d$ and $g-1-d$, respectively. 
Suppose that one of the following holds:
\begin{enumerate}
\item $V$ or $W$ is degenerate;
\item the addition morphism $f:V\times W\longrightarrow V+W$ has degree $\deg(f)<\binom{g-1}{d}$;
\item $\deg(f) = \binom{g-1}{d}$ and $V+W$ is not a theta divisor on $A$ (e.g.\ $A\ncong \hat A$).
\end{enumerate}
Then  $X=V+W$ is an ample divisor with non-rational singularities. 
\end{corollary}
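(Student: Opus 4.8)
The plan is to prove Corollary \ref{cor:singular} by contraposition, deducing it directly from Theorem \ref{thm:P} together with Corollary \ref{cor:V+W=ample}. First I would observe that since $V$ and $W$ are geometrically nondegenerate of complementary dimensions $d$ and $g-1-d$, Corollary \ref{cor:V+W=ample} guarantees that $X=V+W$ is automatically an ample divisor on $A$. Thus the only thing left to establish in each of the three cases is that $X$ cannot have only rational singularities; equivalently, that the hypotheses of each case are incompatible with the conclusion of Theorem \ref{thm:P}.

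The core of the argument is to suppose, for contradiction, that $X=V+W$ is an ample divisor with \emph{at most} rational singularities, so that Theorem \ref{thm:P} applies verbatim. In case (1), Theorem \ref{thm:P}(\ref{item:P:1}) would force both $V$ and $W$ to be nondegenerate, contradicting the assumption that one of them is degenerate. In case (2), Theorem \ref{thm:P}(\ref{item:P:2}) would give the lower bound $\deg(f)\geq \binom{g-1}{d}$, contradicting $\deg(f)<\binom{g-1}{d}$. In case (3), we are in the minimal-degree situation $\deg(f)=\binom{g-1}{d}$, so Theorem \ref{thm:P}(\ref{item:P:3:0}) would force $X$ to be a theta divisor, i.e. $h^0(A,\mathcal O_A(X))=1$, contradicting the assumption that $X$ is not a theta divisor. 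Hence in all three cases the singularities of $X$ cannot be rational, which is exactly the assertion.

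For the parenthetical remark in case (3), namely that $A\ncong \hat A$ already ensures $X$ is not a theta divisor, I would argue as follows: if $X$ were a theta divisor, then $\mathcal O_A(X)$ would define a principal polarization on $A$, and a principal polarization is precisely an isomorphism $A\stackrel{\sim}{\longrightarrow}\hat A$ (given by $x\mapsto t_x^{\ast}\mathcal O_A(X)\otimes \mathcal O_A(X)^{-1}$). Thus $A\ncong \hat A$ rules out the existence of any theta divisor on $A$, and in particular $X$ is not one.

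I do not expect a serious obstacle here, since the corollary is essentially a repackaging of Theorem \ref{thm:P} in contrapositive form; the main subtlety is simply bookkeeping the precise nondegeneracy hypotheses. The one point requiring slight care is that Theorem \ref{thm:P} requires $V$ and $W$ to have \emph{positive} dimensions, so that its conclusions are available; for the boundary cases $d=0$ or $d=g-1$ the statement should be interpreted so that the relevant subvariety is a point, and the geometric nondegeneracy of a point forces $A$ to be trivial, so these degenerate ranges are either vacuous or trivially handled. Beyond this, the deduction is a direct logical consequence of the cited results.
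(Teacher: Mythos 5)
Your proof is correct and is exactly the paper's argument: the paper's entire proof of this corollary reads ``This follows from Theorem \ref{thm:P} and Corollary \ref{cor:V+W=ample}'', and your contrapositive case-by-case unpacking (ampleness from Corollary \ref{cor:V+W=ample}, then each hypothesis contradicting items (\ref{item:P:1}), (\ref{item:P:2}), (\ref{item:P:3:0}) of Theorem \ref{thm:P} respectively) is precisely the intended deduction, as is your remark identifying a theta divisor with a principal polarization for the parenthetical in case (3).

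One aside in your write-up is wrong, though it does not affect the main argument: a point of an abelian variety is always nondegenerate in the sense of Section \ref{subsec:nondeg}, since its class is a nonzero multiple of the fundamental class and so $\cup[Z]:H^{0,0}(A)\to H^{g,g}(A)$ is injective; geometric nondegeneracy of a point certainly does not force $A$ to be trivial. Consequently the boundary cases $d=0$ and $d=g-1$ are neither vacuous nor trivially true --- in fact case (3) genuinely fails there: on a $(1,2)$-polarized abelian surface, take $V$ a point and $W$ a smooth irreducible member of the polarization pencil; then $X=V+W$ is a smooth ample divisor, $\deg(f)=1=\binom{g-1}{0}$, and $X$ is not a theta divisor, contradicting the asserted non-rationality of singularities. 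The correct reading is that the corollary, like Theorem \ref{thm:P} from which it is deduced, tacitly assumes $V$ and $W$ have positive dimension.
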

\begin{proof}
This follows from Theorem \ref{thm:P} and Corollary \ref{cor:V+W=ample}.
\end{proof}

The following two applications of Theorem \ref{thm:P} turn out to imply Theorem \ref{thm:GV}. 

\begin{corollary} \label{cor:P}
Let $(A,\Theta)$ be a $g$-dimensional indecomposable ppav, and let $V,W\subset A$ be subvarieties of minimal classes $[V]=\frac{\theta^{g-d}}{(g-d)!}$ and $[W]=\frac{\theta^{d+1}}{(d+1)!}$.
If $V+W=\Theta$, then 
\begin{enumerate}
\item $p_g(V)=\binom{g}{d}$ and $p_g(W)=\binom{g}{d+1}$;
\item $V$ has property $(\mathcal P)$ with respect to $W$ and viceversa.
\end{enumerate}
\end{corollary}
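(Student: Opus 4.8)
The plan is to deduce both assertions directly from part~(3) of Theorem~\ref{thm:P}: its items (3)(ii) and (3)(iii) state \emph{verbatim} the two conclusions we want, so the whole argument reduces to verifying the hypotheses of that theorem and, above all, to showing that the addition map $f: V\times W\longrightarrow \Theta$ attains the minimal degree $\binom{g-1}{d}$. Thus I would check three points: that $V$ and $W$ have the prescribed dimensions, that $X=V+W$ is an ample divisor with at most rational singularities, and that $\deg(f)=\binom{g-1}{d}$.

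The first two are immediate. The cohomology classes $[V]=\theta^{g-d}/(g-d)!$ and $[W]=\theta^{d+1}/(d+1)!$ force $\dim V=d$ and $\dim W=g-d-1$, both positive in the relevant range $1\le d\le g-2$. Since $\Theta$ is a principal polarization it is ample, and since $(A,\Theta)$ is indecomposable its theta divisor has only rational singularities by the theorem of Ein and Lazarsfeld \cite{ein-laz} recalled in the introduction. Hence $X=V+W=\Theta$ meets the hypotheses of Theorem~\ref{thm:P}.

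The heart of the matter is the degree computation. As $\dim(V\times W)=g-1=\dim\Theta$ and $V+W=\Theta$, the map $f$ is surjective between varieties of the same dimension, hence generically finite, and by Section~\ref{subsec:star} we have $[V]\star[W]=\deg(f)\cdot[\Theta]=\deg(f)\cdot\theta$. I would evaluate the left-hand Pontryagin product by passing to the dual abelian variety: the isomorphism $\PD$ of Section~\ref{subsec:star} exchanges $\star$ with the cup product via~(\ref{eq:PD}), and satisfies the standard identity $\PD(\theta^k/k!)=\hat\theta^{\,g-k}/(g-k)!$ (identifying $\hat A$ with $A$ through $\Theta$). Applying $\PD$ then gives
\[
\PD([V]\star[W])=\frac{\hat\theta^{\,d}}{d!}\cup\frac{\hat\theta^{\,g-d-1}}{(g-d-1)!}=\binom{g-1}{d}\cdot\frac{\hat\theta^{\,g-1}}{(g-1)!}=\binom{g-1}{d}\cdot\PD(\theta).
\]
Comparing with $\PD(\deg(f)\cdot\theta)$ and using that $\PD$ is an isomorphism yields $\deg(f)=\binom{g-1}{d}$; the fact that this must be a positive integer also confirms that the (a priori sign-ambiguous) normalization of $\PD$ is harmless here.

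With $\deg(f)=\binom{g-1}{d}$ in hand, the addition map reaches the minimal value of the bound in Theorem~\ref{thm:P}\,(2), so items (3)(ii) and (3)(iii) of that theorem apply and give $p_g(V)=\binom{g}{d}$, $p_g(W)=\binom{g}{d+1}$, together with property~$(\mathcal P)$ of $V$ with respect to $W$ and vice versa. I do not expect a genuine obstacle: all the substance already lies in Theorem~\ref{thm:P}, and the only nonformal step is the Pontryagin-product calculation, whose sole delicacy is bookkeeping the duality $\PD$ correctly---which is in any case pinned down by the requirement that $\deg(f)$ come out a positive integer.
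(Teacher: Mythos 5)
Your proposal is correct and follows essentially the same route as the paper: the paper's proof likewise invokes Ein--Lazarsfeld for rational singularities of $\Theta$, computes $\binom{g-1}{d}\cdot\theta=[V]\star[W]=\deg(f)\cdot\theta$ to conclude $\deg(f)=\binom{g-1}{d}$, and then cites Theorem~\ref{thm:P}. The only difference is cosmetic: the paper states the Pontryagin-product identity tersely by reference to Section~\ref{subsec:star}, whereas you spell out the underlying $\PD$-duality calculation explicitly.
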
 
\begin{proof}
Let $f:V\times W\longrightarrow \Theta$ be the addition map and recall that $\Theta$ has only rational singularities \cite{ein-laz}.
By the definition of the Pontryagin product (see Section \ref{subsec:star}),
$$
\binom{g-1}{d}\cdot \theta=[V]\star[W]=\deg(f)\cdot [V+W]=\deg(f)\cdot \theta .
$$
Hence, $\deg(f)=\binom{g-1}{d}$ and so the result follows from Theorem \ref{thm:P}.
\end{proof}

\begin{proposition} \label{prop:minclass}
Let $(A,\Theta)$ be an indecomposable ppav of dimension $g$, and let $V,W\subseteq A$ be closed pure-dimensional subschemes  of minimal classes $[V]=\frac{\theta^{g-d}}{(g-d)!}$ and $[W]=\frac{\theta^{d+1}}{(d+1)!}$.
If for each closed point $(v,w)\in V\times W$, $v+w\in \Theta$, then $V$ and $W$ are irreducible and generically reduced.
\end{proposition}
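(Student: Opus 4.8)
The hypothesis says that $v+w\in\Theta$ for every closed point $(v,w)\in V\times W$, which is exactly the set-theoretic condition that the addition morphism $V\times W\to A$ lands inside $\Theta$. Since $[V]=\frac{\theta^{g-d}}{(g-d)!}$ and $[W]=\frac{\theta^{d+1}}{(d+1)!}$ are minimal classes, the Pontryagin product computation from Section \ref{subsec:star} gives
\[
[V]\star[W]=\binom{g-1}{d}\cdot\theta,
\]
so provided $V+W$ has the expected dimension $g-1$, the addition morphism $f$ is generically finite of degree $\deg(f)=\binom{g-1}{d}$ onto the divisor $V+W$, whose class is $\theta$. The plan is to reduce to components of $V$ and $W$, apply the machinery of Theorem \ref{thm:P} (via Corollary \ref{cor:P}, whose property $(\mathcal P)$ conclusion is the main engine), and then use property $(\mathcal P)$ to rule out the existence of more than one component and of multiplicities.

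**Reduction to irreducibility and reducedness via components.** First I would argue that $V+W=\Theta$ set-theoretically: since $\Theta$ is irreducible of dimension $g-1$ and $V+W\subseteq\Theta$ has a cohomology class equal to a positive multiple of $\theta$ (equivalently is an ample divisor by the sum of geometrically nondegenerate subvarieties being geometrically nondegenerate, Corollary \ref{cor:V+W=ample}), the image must be all of $\Theta$, and $\deg(f)=\binom{g-1}{d}$. Now suppose for contradiction that $V=\sum_i a_i V_i$ has either more than one reduced component $V_i$ or some multiplicity $a_i>1$; similarly for $W$. The class identity $[V]=\sum_i a_i[V_i]=\frac{\theta^{g-d}}{(g-d)!}$ forces each $V_i$ to be geometrically nondegenerate of dimension $d$ with $\sum_i a_i[V_i]$ still minimal. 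The key point is that for any single reduced component $V_i$ and $W_j$, the pair still satisfies $V_i+W_j\subseteq\Theta$, and since each is geometrically nondegenerate, $V_i+W_j$ is again an ample divisor, hence $=\Theta$. Applying Corollary \ref{cor:P} to the pair $(V_i,W_j)$ (after checking their classes are the appropriate minimal classes, which follows because $\Theta=V_i+W_j$ and the Pontryagin product forces $\deg(f_{ij})=\binom{g-1}{d}$ exactly when $[V_i]$ and $[W_j]$ are the full minimal classes) shows that each $V_i$ individually has property $(\mathcal P)$ with respect to each $W_j$.

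**Using property $(\mathcal P)$ to collapse components and multiplicities.** The main obstacle is the combinatorics of counting degrees once multiple components or multiplicities are present. The strategy is: the degree $\binom{g-1}{d}$ of the total addition map $f:V\times W\to\Theta$ decomposes as a weighted sum $\sum_{i,j} a_ib_j\deg(f_{ij})$ over the components, where each $\deg(f_{ij})$ is itself $\binom{g-1}{d}$ by the argument above (each reduced pair fills out $\Theta$ with minimal degree). If there were more than one pair, or any multiplicity exceeding $1$, the total degree would strictly exceed $\binom{g-1}{d}$, because we would be summing several copies of $\binom{g-1}{d}$ with positive weights. Concretely, $\binom{g-1}{d}=\sum_{i,j}a_ib_j\binom{g-1}{d}$ forces $\sum_{i,j}a_ib_j=1$, whence there is exactly one component on each side and both multiplicities equal $1$. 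This simultaneously gives irreducibility (one reduced component each) and generic reducedness (multiplicity one).

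**Anticipated difficulty.** The delicate step is justifying that each reduced pair $(V_i,W_j)$ has addition degree exactly $\binom{g-1}{d}$ rather than merely at least that bound: I expect to need the full strength of item (\ref{item:P:2}) of Theorem \ref{thm:P} (the lower bound $\deg(f_{ij})\geq\binom{g-1}{d}$) combined with the Pontryagin-product upper bound coming from the fact that $[V_i]\star[W_j]\leq[V]\star[W]$ coefficientwise against $\theta$, so that each individual degree is pinned to the minimum. Property $(\mathcal P)$ itself may be needed as a backstop to exclude the degenerate possibility that distinct components share tangent directions in a way that would spoil the transversality underlying the degree count; this is where Example \ref{ex:propP} and the nondegeneracy conclusion item (\ref{item:P:1}) of Theorem \ref{thm:P} enter, ensuring the fibers of $f$ over a general point of $\Theta$ are counted without hidden coincidences.
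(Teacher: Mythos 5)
Your overall skeleton---decompose $V$ and $W$ into components with multiplicities, show each pair of components sums to $\Theta$ with addition degree at least $\binom{g-1}{d}$, and then collapse everything through the Pontryagin identity $\binom{g-1}{d}\cdot\theta=\sum_{i,j}a_ib_jc_{ij}\cdot\theta$---is exactly the paper's strategy, and your final counting step is sound (in fact only the lower bound $c_{ij}\geq\binom{g-1}{d}$ from item (\ref{item:P:2}) of Theorem \ref{thm:P} is needed, not exact equality). But there is a genuine gap where you claim that the class identity $[V]=\sum_i a_i[V_i]=\frac{\theta^{g-d}}{(g-d)!}$ ``forces each $V_i$ to be geometrically nondegenerate.'' This is false as stated: minimality of the total class gives, via Hard Lefschetz, nondegeneracy of the whole scheme $V$, but by Lemma \ref{lem:geomnondeg} geometric nondegeneracy of $V$ only means that every nonzero decomposable $d$-form is detected by \emph{some} component, possibly a different component for different forms; an individual component $V_i$ can a priori be degenerate, and its class $[V_i]$ need not be a multiple of $\theta^{g-d}$ at all. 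Consequently you cannot conclude $V_i+W_j=\Theta$ for all pairs this way, nor can you apply Corollary \ref{cor:P} to the pairs $(V_i,W_j)$: that corollary assumes the summands have minimal classes, which for the components is essentially what the proposition is trying to establish, so your parenthetical justification is circular. The insistence on pinning each $\deg(f_{ij})$ to exactly $\binom{g-1}{d}$ is likewise both unnecessary and unobtainable at that stage.

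The paper closes precisely this gap with a bootstrap. From $\binom{g-1}{d}\cdot\theta=\sum_{i,j}a_ib_jc_{ij}\cdot\theta$ one first extracts only a single pair $(i_0,j_0)$ with $c_{i_0j_0}\neq 0$, i.e.\ $V_{i_0}+W_{j_0}=\Theta$. Since $\Theta$ has rational singularities \cite{ein-laz}, item (\ref{item:P:1}) of Theorem \ref{thm:P} then yields nondegeneracy of $V_{i_0}$ and $W_{j_0}$ as a \emph{conclusion}, not a hypothesis. Lemma \ref{lem:sum} (using the assumed pure-dimensionality of $V$ and $W$) then forces $V_{i_0}+W_j=\Theta$ for every $j$ and $V_i+W_{j_0}=\Theta$ for every $i$; applying item (\ref{item:P:1}) again makes \emph{all} components nondegenerate, whence $V_i+W_j=\Theta$ and $c_{ij}\geq\binom{g-1}{d}$ for all $i,j$ by item (\ref{item:P:2}), and your counting argument finishes the proof. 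Two minor further points: the cases $d=0$ and $d=g-1$ must be treated separately, since Theorem \ref{thm:P} requires both summands to be positive-dimensional; and the ``degree of $f$'' on the possibly reducible, nonreduced scheme $V\times W$ is not a single well-defined number---only the weighted identity above makes sense, which is why the paper works with the $c_{ij}$ from the start.
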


\begin{proof} 
Note first that the statement is true if $d=0$ or $d=g-1$.
Indeed, it suffices by symmetry to deal with $d=0$.
Since $[V]=1$, $V$ is a single reduced point $v$ and so $W^{\red}=\Theta-v$ is a translate of $\Theta$, hence irreducible. 
Since $[W]=\theta$, $W$ is generically reduced and so the only nontrivial statement is $p_g(\Theta)=g$, which is an easy consequence of the fact that $\Theta$ has only rational singularities \cite{ein-laz}.
In what follows, we may thus assume that $V$ and $W$ are positive-dimensional.

Let $V_i\subseteq V^{\red}$ and $W_j\subseteq W^{\red}$ be the irreducible components of the reduced schemes $V^{\red}$ and $W^{\red}$.
Then the cohomology classes of $V$ and $W$ are given by 
$
[V]=\sum_i a_i[V_i]
$
and 
$
[W]=\sum_jb_j[W_j]
$
for some positive integers $a_i$ and $b_j$.

By assumptions $V_i+W_j\subseteq \Theta$ for all $i$ and $j$. 
Since $V$ and $W$ have minimal classes,
\begin{align} \label{eq:Vi+Wi}
\binom{g-1}{d}\cdot \theta =[V]\star [W] = \sum_{i,j}a_ib_j[V_i]\star[W_j]= \sum_{i,j}a_ib_j c_{ij} \cdot \theta,
\end{align}
where 
$$
c_{ij}=
\begin{cases}
\deg(V_i\times W_j \longrightarrow \Theta)\ \ &\text{if }\  V_i+W_j=\Theta ,\\
0 \ \ &\text{else},
\end{cases}
$$
see Section \ref{subsec:star} above.
It follows that there is at least one pair of indices $(i_0,j_0)$ such that $c_{i_0j_0}\neq 0$
and so $V_{i_0}+W_{j_0}=\Theta$.

By \cite{ein-laz}, $\Theta$ has only rational singularities.
Item (\ref{item:P:1}) in Theorem \ref{thm:P} implies therefore that $V_{i_0}$ and $W_{j_0}$ are both nondegenerate, hence geometrically nondegenerate.
It follows from Lemma \ref{lem:geomnondeg} hat $V_{i_0}+W_j=\Theta$ for all $j$ and similarly $V_i+W_{j_0}=\Theta$ for all $i$.
Hence, for all $i$ and $j$, $V_i$ and $W_j$ are nodegenerate by item (\ref{item:P:1}) in Theorem \ref{thm:P} and so $V_i+W_j=\Theta$ for all $i$ and $j$. 
Therefore, item (\ref{item:P:2}) in Theorem \ref{thm:P} implies $c_{ij}\geq \binom{g-1}{d}$ for all $i$ and $j$.
It then follows from (\ref{eq:Vi+Wi}) that $V$ and $W$ are irreducible and generically reduced.
\end{proof}

\begin{proof}[Proof of Theorem \ref{thm:GV}]
Let $(A,\Theta)$ be an indecomposable $g$-dimensional ppav, and let $Z\subset A$ be a $d$-dimensional geometrically nondegenerate closed GV-subscheme with theta dual $V(Z)$.
Pareschi and Popa proved that $Z$ and $V(Z)$ are pure-dimensional Cohen--Macauly subschemes of $A$ of minimal cohomology classes $[Z]=\frac{\theta^{g-d}}{(g-d)!}$ and $[V(Z)]=\frac{\theta^{d+1}}{(d+1)!}$, see Theorem \ref{thm:pareschi-popa} above.
Moreover, for all closed points $(x,y)\in Z\times V(Z)$, $x-y\in\Theta$, see \cite[p.\ 216]{pareschi-popa}. 
Theorem \ref{thm:GV} follows therefore from Proposition \ref{prop:minclass} and Corollary \ref{cor:P}, because Cohen--Macauly schemes have no embedded points, see \cite[Theorem 17.3.(i)]{matsumura}. 
\end{proof}


\begin{remark} \label{rem:Fano} 
Let $F\subset (JY,\Theta_Y)$ be the Abel--Jacobi embedded Fano surface of lines on a smooth cubic threefold $Y$. 
Clemens and Griffiths \cite{clemens-griffiths} proved the the subtraction morphism $f:F\times F\longrightarrow \Theta_Y=F-F$ has degree $6$.
It therefore follows from Theorem \ref{thm:P} that $F$ has property $(\mathcal P)$ with respect to $-F$. 
This is a nontrivial fact which can also be deduced from the description of the fibers of the difference map $F\times F\longrightarrow \Theta_Y$ in terms of double six configurations of lines on a smooth cubic surface, see \cite[p.\ 348]{clemens-griffiths}.
\end{remark}



\section*{Acknowledgment}
I am grateful to both referees whose suggestions significantly improved the exposition.
I also thank Mart\'i Lahoz and Mihnea Popa for conversations and comments on a preliminary version of this paper.  
The author is member of the SFB/TR 45.

\end{document}